\newcommand{\N}{\mathbb{N}}
\newcommand{\R}{\mathbb{R}}
\newcommand{\Z}{\mathbb{Z}}
\renewcommand{\1}{\mathbf{1}}
\newcommand{\NN}{\mathcal{N}}
\newcommand{\ad}{\text{ and }}
\newcommand{\ceq}{\coloneqq} % :=
\DeclarePairedDelimiter\absd{\lvert}{\rvert} % absolute val
\newcommand{\bpar}[1]{\left(#1\right)}
\newcommand{\bs}{\baselineskip}
\newcommand{\ind}[1]{\1_{[#1]}} % indicator 
\newcommand{\disteq}{\stackrel{\mathrm d}{=}} % distributional equals
\newcommand{\E}[1]{{\mathbf E}\left[#1\right]}				
\newcommand{\va}{{\mathbf{Var}}}
\title[Asymptotics for harmonic descent chain and applications]{Asymptotics for the harmonic descent chain and applications to critical beta-splitting trees}
\author{Anna Brandenberger \and Byron Chin \and Elchanan Mossel}
\address{Department of Mathematics, MIT}
\email{\{abrande,byronc,elmos\}@mit.edu}
\begin{document}

\begin{abstract}
Motivated by the connection to a probabilistic model of phylogenetic trees introduced by Aldous, we study the recursive sequence governed by the rule $x_n = \sum_{i=1}^{n-1} \frac{1}{h_{n-1}(n-i)} x_i$ where $h_{n-1} = \sum_{j=1}^{n-1} 1/j$, known as the harmonic descent chain. While it is known that this sequence converges to an explicit limit $x$, not much is known about the rate of convergence. We first show that a class of recursive sequences including the above are decreasing and use this to bound the rate of convergence. Moreover, for the harmonic descent chain we prove the asymptotic $x_n - x = n^{-\gamma_* + o(1)}$ for an implicit exponent $\gamma_*$. As a consequence, we deduce central limit theorems for various statistics of the critical beta-splitting random tree. This answers a number of questions of Aldous, Janson, and Pittel.
\end{abstract}

\maketitle 

\section{Introduction}

The critical beta-splitting tree is a probabilistic model for phylogenetic trees, introduced by David Aldous in \cite{aldous1996probability} and subject to extensive recent study as surveyed in \cite{aldous2025critical}. In this paper, we answer some questions of Aldous and Janson given in their survey on the topic~\cite{aldous2023critical}. Namely, we answer Open Problems 5 and 6, and partially answer Open Problem 8 by proving asymptotics for a process known as the harmonic descent chain and deducing central limit theorems for various statistics of the tree; see section~\ref{subsec: defs} for definitions. These questions essentially involve recursions defined as follows: 
\begin{equation}\label{eq:recursion}
x_n = \sum_{i=1}^{n-1} \frac{1}{h_{n-1}(n-i)} \cdot x_i \quad \text{for $n > k$}    
\end{equation}
with initial conditions $x_1, \ldots, x_{k-1} = 0$, $x_k > 0$, where $h_{n-1} = \sum_{j=1}^{n-1} 1/j$ is the harmonic sum. Our main theorem gives bounds on the rate of convergence for this recursive sequence.

\begin{thm}\label{thm:exponent}
    We have that 
    \begin{enumerate}[ref=(\roman*)]
        \item \label{thm:x_n-decreasing} $x_{n+1} < x_n$ for all $n \geq k$. In particular, for any fixed $k$, the limit $x \ceq \lim_{n \to\infty} x_n$ exists.
        \item \label{thm:x_n-simple-rate} For all $n \geq k$, $x_n - x = O(\frac{kx_k}{n-k})$.
        \item \label{thm:bootstrap}
        For fixed $c \geq 2$ and $\ell \in \N$, and for $k$ sufficiently large, $x_{k+m} \leq \frac{c^\ell x_k}{mh_k}$ for each $m \leq k^{d_\ell}$ where $d_\ell = 1-\frac{1}{c+1} - 2^{-\ell}$.

        \item \label{thm:exponent-fixed-k} For fixed $k \in \N$, $x_n - x = n^{-\gamma_*+o(1)}$, where $\gamma_*$ is the solution to 
        \begin{equation}
        \sum_{i=1}^\infty \left(\frac{1}{i} - \frac{i}{(i+1)(i+1-\gamma)}\right) = 0 
        \end{equation}
        with $1.5 < \gamma_* < 2$.
        \item \label{thm:exponent-growing-k} For all $\epsilon > 0$ and $k = o(n/\log^3 n)$, there exist $0 < c(k, \epsilon) < C(k, \epsilon) = O_\epsilon(x_k k^{0.01})$ such that $c(k, \epsilon)(n / k)^{-\gamma_*-\epsilon} \leq x_n - x \leq C(k, \epsilon)(n / k)^{-\gamma_*+\epsilon}$.
    \end{enumerate}
\end{thm}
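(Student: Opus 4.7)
The backbone of the analysis is the probabilistic reformulation: defining the harmonic descent chain $(X_t)_{t \geq 0}$ with one-step law $\P(X_{t+1} = i \mid X_t = m) = 1/(h_{m-1}(m-i))$ for $1 \leq i < m$ and writing $\tau = \min\{t : X_t \leq k\}$, the recursion \eqref{eq:recursion} is exactly the first-step equation for the absorption probability, so $x_n = x_k \cdot \psub{n}{X_\tau = k}$. All five parts of the theorem reduce to estimates on this single probability.

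\medskip

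\emph{Parts (i) and (ii).} Subtracting consecutive instances of the recursion yields
\[
  h_n (x_n - x_{n+1}) = \sum_{i=1}^{n-1} \frac{x_i}{(n-i)(n+1-i)} - \frac{n-1}{n}\,x_n,
\]
so monotonicity reduces to showing the right side is positive. My plan is to prove this by induction on $n$ in the more general setting hinted at in the introduction: allow arbitrary nonnegative initial data and track how the monotone structure propagates, the essential point being that the weights $1/((n-i)(n+1-i))$ concentrate more mass near $i = n-1$ than the weights $1/(h_{n-1}(n-i))$ used for $x_n$. With (i) in hand, the limit $x = \lim x_n$ exists and $\delta_n := x_n - x$ is a nonnegative, monotone sequence satisfying the same linear identity; the rate $O(kx_k/(n-k))$ in (ii) then follows by induction combined with the partial-fraction identity $\sum_{i=k}^{n-1} 1/((i-k+1)(n-i)) = 2 h_{n-k}/(n-k+1)$.

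\medskip

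\emph{Part (iii), the bootstrap.} The scheme is iterated self-improvement. Assume inductively that $x_{k+m} \leq c^\ell x_k/(m h_k)$ for $m \leq k^{d_\ell}$; substitute this bound into the recursion at level $n = k+m'$ with $m' \leq k^{d_{\ell+1}}$, split the inner convolution into the range $i \leq k + k^{d_\ell}$ (where the inductive bound applies) and its complement (where only the cruder $x_i \leq x_k$ is available), and estimate the two pieces separately. Each pass costs a multiplicative factor of $c$ but extends the allowed range by roughly $k^{2^{-\ell}}$, yielding in the limit $d_\ell \to 1 - 1/(c+1)$.

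\medskip

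\emph{Parts (iv) and (v), main obstacle.} The value of $\gamma_*$ must arise from a spectral identity for the recursion. Substituting the ansatz $y_n = n^{-\gamma}$ into the linear equation $y_n = \sum_i y_i/(h_{n-1}(n-i))$, splitting the convolution into the regime of bounded $i$ and the regime of $i$ close to $n$, and rearranging by an Abel/Mellin-style manipulation should single out the characteristic equation
\[
  \sum_{i=1}^\infty \left(\frac{1}{i} - \frac{i}{(i+1)(i+1-\gamma)}\right) = 0.
\]
For the upper bound $x_n - x \leq n^{-\gamma_* + \epsilon}$, I would feed the polynomial bound from (iii) into a Gronwall-style iteration of the recursion that at each application gains a factor $n^{-\epsilon'}$, until saturating at the spectral barrier $\gamma_*$; for the matching lower bound, construct an explicit nonnegative super-solution of the form $C n^{-\gamma_* - \epsilon}$ and compare via positivity of $y_n$. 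Part (v) follows by tracking $k$-dependence uniformly throughout, with the threshold $k = o(n/\log^3 n)$ being exactly what is needed for the bootstrap range of (iii) to dominate the logarithmic losses at each step of the spectral iteration. The hardest point is pinning down the characteristic equation rigorously, in particular justifying that the $o(1)$ correction absorbs all subleading logarithmic and initial-condition contributions uniformly in $k$.
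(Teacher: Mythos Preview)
Your plan for part (iii) matches the paper's double induction. The approaches diverge elsewhere, and for (iv)--(v) there is a genuine gap.

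\textbf{The main problem: parts (iv)--(v).} You propose to substitute the ansatz $y_n = n^{-\gamma}$ into the \emph{original} recursion $y_n = \sum_i y_i/(h_{n-1}(n-i))$ and read off the characteristic equation. This cannot work: that recursion is satisfied asymptotically by $n^{-\gamma}$ for \emph{every} $\gamma$. Writing $j = n-i$ and expanding, $\sum_{j=1}^{n-1} (n-j)^{-\gamma}/j = n^{-\gamma}\big(h_{n-1} + O(1)\big)$, so after dividing by $h_{n-1}$ the leading term is exactly $n^{-\gamma}$ regardless of $\gamma$. The value $\gamma_*$ is hidden in the $O(1/h_n)$ correction, and no amount of Abel/Mellin manipulation at the level you describe will extract it, because the divergent part of the convolution cancels identically.

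The paper's key move is to pass to the consecutive differences $d_n = x_n - x_{n+1}$ and derive a \emph{different} recursion for them, namely $d_n = \frac{1}{h_n}\sum_{i=k}^{n-1}\alpha'_{i,n} d_i$ with
\[
\alpha'_{i,n} \approx \frac{1}{n}\left(\frac{i/n}{1-i/n} + \log(1 - i/n)\right).
\]
These coefficients are \emph{not} a probability kernel. Substituting $d_i \sim i^{-(\gamma+1)}$ here and approximating by an integral gives a Riemann sum that diverges like $h_n$ at the upper endpoint, and matching this divergence against the $h_n$ on the left side leaves a finite residual depending on $\gamma$; setting that residual to zero is exactly the equation defining $\gamma_*$. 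The passage to differences is what converts a subleading effect in the $x_n$-recursion into a leading-order effect that can be isolated.

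\textbf{A smaller gap: part (i).} Your identity $h_n(x_n - x_{n+1}) = \sum_i \frac{x_i}{(n-i)(n+1-i)} - \frac{n-1}{n}x_n$ is correct, and the observation that the new weights $q_i \propto 1/((n-i)(n+1-i))$ satisfy $q_i/p_i$ increasing is the right one. But the sequence $(x_i)_{i \leq n-1}$ is not monotone: it is zero on $[1,k-1]$, jumps at $k$, and then (inductively) decreases. A likelihood-ratio comparison between $p$ and $q$ only gives $\sum q_i x_i > \sum p_i x_i$ for monotone $x$, so your ``concentration of mass'' heuristic is not enough as stated. The paper gets around this by first rewriting via summation by parts so that everything is expressed in the differences $x_i - x_{i+1}$, then taking a linear combination of two consecutive instances to cancel the single negative term $(x_{k-1} - x_k)$ outright; what remains is a recursion with manifestly positive coefficients, and the induction closes immediately. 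This same summation-by-parts structure is exactly what sets up the $d_n$-recursion used in (iv)--(v), so the two parts are more tightly linked than your outline suggests.
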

Note that to simplify notation, we suppress the dependence on $k$, writing $x_n \ceq x_n(k)$ and $x \ceq x(k)$ throughout.
\begin{rem}
    The fact that $\lim_{n \to \infty} x_n$ exists was already shown in \cite{aldous2025critical}. In fact, an explicit form of the limit is known \cite{aldous2024harmonic, iksanov2025harmonic, aldous2024critical3, aldous2024critical4}.
\end{rem}
\begin{rem}
    Numerically, $\gamma_* = 1.567353753101655332547...$.
\end{rem}
\begin{rem}\label{remark}
    For concreteness, we note here that we use~\ref{thm:bootstrap} with the following instantiations:
    \begin{itemize}
        \item $c=2, \ell=1$: $x_{k+m} \leq \frac{2x_k}{mh_k}$ for $m \leq k^{1/6}$. 
        \item $c=1000, \ell=9$: $x_{k+m} = O(\frac{x_k}{mh_k})$ for $m \leq k^{0.99}$. In particular, $x_n \leq \frac{x_k}{n-k}$ for $k \geq n-n^{0.99}$ and $n$ sufficiently large.
    \end{itemize}
\end{rem}

Let us briefly discuss the interpretation of the above theorem. Item~\ref{thm:x_n-decreasing} establishes the useful decreasing property of the sequence conjectured by Aldous and Pittel~\cite{aldous2025critical} and reproves the existence of a limit \cite[Theorem 2.16]{aldous2025critical}. Items~\ref{thm:x_n-simple-rate} and~\ref{thm:bootstrap} are preliminary long-term (large $n$) and short-term (small $n$) error estimates, respectively. Their proofs are simple and suffice for some applications. Items~\ref{thm:exponent-fixed-k} and~\ref{thm:exponent-growing-k} are sharper error rates that are required for more delicate applications. In particular,~\ref{thm:exponent-fixed-k} is a precise asymptotic for the exponent of the main error term. 

Theorem~\ref{thm:exponent}\ref{thm:x_n-decreasing} follows from a more general statement about the behavior of recursive averages. Notably, the conditions apply to $\beta$-splitting trees for any $\beta < 0$ (defined below).
\begin{prop}\label{prop:x_n-decreasing}
    Let $x_1, \ldots, x_{k-1} = 0, x_k > 0$ and 
    \[ x_n = \sum_{i=1}^{n-1} p(n, i)x_i \ \text{ for $n > k$}. \]
    Suppose that $\sum_{i=1}^{n-1} p(n,i) = 1$ and $\frac{p(n,i)}{p(n+1, i)}$ is increasing in $i$ for every $n$. Then $x_{n+1} < x_n$ for all $n \geq k$. In particular, $\lim_{n \to \infty} x_n$ exists. 
\end{prop}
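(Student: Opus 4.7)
The plan is to proceed by induction on $n \geq k$, carrying the hypothesis $x_k > x_{k+1} > \cdots > x_n > 0$. The base case is a direct computation: since $x_i = 0$ for $i < k$, the recursion gives $x_{k+1} = p(k+1,k)\,x_k$, and $p(k+1,k) < 1$ because the well-definedness of the likelihood ratios $\rho_i \ceq p(n+1,i)/p(n,i)$ forces $p(k+1,i) > 0$ for $i < k$, so that $\sum_{i < k} p(k+1,i) > 0$.

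For the inductive step, I will introduce $y_i \ceq x_i - x_n$, so $y_i = -x_n$ for $i < k$, $y_i > 0$ for $k \le i \le n-1$ by the inductive hypothesis, and $y_n = 0$. Subtracting $x_n$ from the recursions at $n$ and at $n{+}1$ (using $\sum_i p(n,i) = \sum_i p(n+1,i) = 1$) yields
\[
0 \;=\; \sum_{i=1}^{n-1} p(n,i)\,y_i, \qquad x_{n+1} - x_n \;=\; \sum_{i=1}^{n-1} p(n+1,i)\,y_i.
\]
Splitting each sum at $i = k$ and setting
\[
L_m \ceq \sum_{i=1}^{k-1} p(m,i), \qquad M_m \ceq \sum_{i=k}^{n-1} p(m,i)\,y_i,
\]
the first identity reads $M_n = x_n L_n$, and a rearrangement of the second turns the desired conclusion $x_{n+1} - x_n < 0$ into the comparison
\[
\frac{M_{n+1}}{M_n} \;<\; \frac{L_{n+1}}{L_n}.
\]

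Both sides are then weighted averages of $\rho_i$, which by hypothesis is strictly decreasing in $i$. The ratio $L_{n+1}/L_n$ averages the $\rho_i$ for $i < k$ with weights $p(n,i)$, giving $L_{n+1}/L_n \ge \rho_{k-1}$; the ratio $M_{n+1}/M_n$ averages the $\rho_i$ for $k \le i \le n-1$ with positive weights $p(n,i)\,y_i$, giving $M_{n+1}/M_n \le \rho_k$. Since $\rho_k < \rho_{k-1}$ strictly, this closes the induction.

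The main obstacle is that neither $y_i$ nor the natural coefficient $p(n+1,i) - p(n,i)$ is monotone across $\{1,\dots,n-1\}$, so direct rearrangement or Chebyshev-type arguments do not apply. The key trick is to exploit the sharp sign change of $y_i$ exactly at the boundary $i = k$ of the initial condition: after splitting there, the problem reduces to comparing two weighted averages of the monotone likelihood ratio $\rho_i$ on the adjacent index ranges $\{1,\dots,k-1\}$ and $\{k,\dots,n-1\}$, a configuration on which the monotonicity hypothesis acts cleanly.
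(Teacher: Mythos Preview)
Your argument is correct. The base case and the inductive step both go through as written (with the natural reading that all $p(n,i)>0$, which the hypothesis on the ratios implicitly guarantees and which the paper also uses when dividing by $s(n,k-1)$). One small wording issue: you justify $p(k+1,i)>0$ via well-definedness of $\rho_i=p(n+1,i)/p(n,i)$, but that ratio needs $p(n,i)\ne 0$; it is the hypothesis ratio $p(n,i)/p(n+1,i)$ whose well-definedness forces $p(n+1,i)>0$. This is cosmetic and does not affect the argument. Note also that you use $\rho_k<\rho_{k-1}$ strictly, so you are (correctly) reading ``increasing'' as strict, exactly as the paper does.

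Your route, however, is genuinely different from the paper's. The paper Abel-sums the recursion to rewrite it in terms of the consecutive differences $d_i=x_i-x_{i+1}$, then takes a linear combination of two successive instances to cancel the single negative contribution $x_{k-1}-x_k=-x_k$, leaving an identity $d_n=-s(n+1,k-1)\sum_{i=k}^{n-1}\alpha_{i,n}\,d_i$ whose coefficients $\alpha_{i,n}$ must be shown to be negative term by term. You instead center at $x_n$, split at the index $k$, and reduce everything to the single comparison $M_{n+1}/M_n<L_{n+1}/L_n$ of two weighted averages of the monotone likelihood ratio $\rho_i$ on adjacent index blocks. Your approach is shorter and makes the role of the monotone-likelihood-ratio hypothesis more transparent; in particular you never need the full strength $\alpha_{i,n}<0$ for every $i$, only the aggregate inequality. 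The paper's approach, on the other hand, produces the explicit difference recursion \eqref{eq:exact-diff} with exact coefficients $\alpha_{i,n}$, and this is not incidental: those coefficients are the workhorse of the rate-of-convergence estimates in parts~\ref{thm:x_n-simple-rate}--\ref{thm:exponent-growing-k}. So your proof is cleaner for the monotonicity statement in isolation, while the paper's proof is doing double duty as the setup for the quantitative analysis that follows.
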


\subsection{Beta-splitting tree model}\label{subsec: defs}
We now introduce the critical beta-splitting tree, which is the subject of our main application for Theorem~\ref{thm:exponent}. For any $m \geq 2$, define the distribution $(q_{m}(i))_{i=1}^{m-1}$ as 
\begin{equation}\label{eq:qmi}
    q_m(i) = \frac{m}{2h_{m-1}} \frac{1}{i(m-i)},
\end{equation}
where $h_{m-1} = \sum_{j=1}^{m-1} 1/j$ is the harmonic sum. The critical beta-splitting tree DTCS($n$) is a random binary tree with $n$ leaves (labelled $[n]$) built as follows. First, split $[n]$ into left and right subtrees $\{1, \dots, L_n\}$ and $\{L_n + 1, \dots, n\}$ of respective sizes $L_n$ and $R_n \ceq n-L_n$, where $L_n$ (and $R_n$) has distribution $(q_n(\cdot))$. Then, recursively split each interval of size $m \geq 2$ into two subtrees according to $q_m(\cdot)$, stopping when $m=1$. 
We note that the general $\beta$-splitting model is defined similarly with the probability vector  
$q_m^{\beta}(i)$ defined to be proportional to 
$(i(m-i))^{\beta}$. The choice $\beta = -1$ is deemed \textit{critical} due to leaf-heights changing from order $n^{-\beta-1}$ below this value to order $\log n$ above it \cite{aldous1996probability}. 

A substructure of interest in these models are \textit{fringe subtrees}. A fringe subtree is a subtree induced by picking all descendants of one of the internal vertices of the tree. We refer to these as \textit{clades}, inspired by the phylogenetic motivation of the model. 
 
Aldous and Pittel~\cite{aldous2025critical} also introduce a continuous time version of the model, denoted CTCS($n$), where an interval of size $m \geq 2$ splits at rate $h_{m-1}$ into two intervals. Then, for a uniformly random leaf $\ell$, we can consider the discrete-time Markov chain describing the sequence of clade sizes on the path from the root to $\ell$. The continuous version which exits a clade of size $m$ at rate $h_{m-1}$ is a continuous-time Markov chain on $\Z_{>0}$ with transition rates 
\[
\lambda_{m,i} = (m-i)^{-1}, \ 1 \leq i \leq m-1,\ m \geq 2,
\]
with 1 absorbing. This is known as the harmonic descent chain. Let $a(n,k)$ denote the occupation probability, i.e., the probability that the chain started at state $n$ is ever in state $k$, with $a(n,n) = a(n,1)= 1$. In \cite[Theorem 18]{aldous2023critical}, the authors state that the limit $a(k) \ceq \lim_{n\to\infty} a(n,k)$ exists, and equals
\begin{equation}\label{eq:a_k}
a(k) = \frac{6h_{k-1}}{\pi^2 (k-1)}.    
\end{equation}
Various proofs are known: \cite{aldous2024harmonic} via coupling and ``inspired guesswork", \cite{iksanov2025harmonic} via regenerative composition structures, and \cite{aldous2024critical3, aldous2024critical4} via exchangeable representations. \cite[Open Problem 8]{aldous2023critical} asks about more detailed rates of convergence.

Note that the sequence $a(n,k)/k$ has the form considered in \eqref{eq:recursion}. Indeed, letting $N_n(k)$ be the number of clades of size $k$ in DTCS$(n)$, we can write $a(n,k)$ in terms of $\E {N_n(k)}$ exactly as
\vspace{-.5\baselineskip}
\[
    \E {N_n(k)} = \frac{n a(n,k)}{k},
\]
with $\E{N_n(k)} = 0$ for $n < k$ and $\E {N_k(k)} = 1$.
By the law of total expectation, we have 
\begin{align}
    \frac{\E {N_n(k)} }{n} &= \frac{1}{n} \sum_{i=1}^{n-1} q_{n}(i) (\E{N_i(k)} + \E{N_{n-i}(k)}) \nonumber \\ 
    &= \sum_{i=1}^{n-1} \frac{1}{2h_{n-1} i (n-i)} \E {N_i(k)} + \sum_{i=1}^{n-1} \frac{1}{2h_{n-1} i (n-i)} \E {N_{n-i}(k)} \nonumber \\ 
    &= \sum_{i=1}^{n-1} \frac{1}{h_{n-1} (n-i)} \frac{\E{N_i(k)}}{i}. \label{eq:total-expectation}
\end{align}
Therefore, from Theorem~\ref{thm:exponent}\ref{thm:exponent-fixed-k}--\ref{thm:exponent-growing-k}, for fixed $k \in \N$,
\begin{equation}
    a(n,k) - a(k) = n^{-\gamma_* + o(1)}
\end{equation}
and generally, for $k = o(n/\log^3 n)$, there exist $0 < c(k,\epsilon) < C(k,\epsilon) = O_\epsilon(k^{0.01})$ such that
\begin{equation}
    c(k,\epsilon) (n/k)^{-\gamma_* - \epsilon} \leq {a(n,k) - a(k)} \leq  C(k,\epsilon) (n/k)^{-\gamma_* + \epsilon}.
\end{equation}
This gives an answer to \cite[Open problem 8]{aldous2023critical} for fixed $k$ and makes progress for $k$ growing with $n$. Moreover, a similar analysis as the proof of Corollary~\ref{cor:CLT-length} below proves the following case of~\cite[Ansatz 20]{aldous2023critical}:
\[ \sum_{k=2}^n a(n,k)f(k) \to \sum_{k=2}^\infty a(k)f(k) \]
for any non-negative sequence $(f(k))_{k \geq 2}$ satisfying $f(k) = O(k^{-\epsilon})$ for some $\epsilon > 0$.

\subsection{A few central limit theorems}
Using our main technical theorem, we apply a framework for general limit law results for random variables satisfying distributional recursions to show central limit theorems for several quantities of interest. Specifically, we prove the following variant of the contraction method with degenerate limit equation \cite[Theorem 5.1]{NR:04} to answer \cite[Open Problems 5 and 6]{aldous2023critical}.

\begin{prop}\label{prop: contraction}
    Let $X_n = X_{I_n} + X_{n-I_n} + b_n$ with $I_n = \max\{L_n, R_n\}$. Suppose there exist constants $\mu, \sigma, \epsilon, C > 0$ such that $\absd{\E{X_n} - \mu n} \leq Cn^{1/2-\epsilon}$ and $\absd{\va[X_n] - \sigma^2 n} \leq Cn^{1-\epsilon}$, and $\E{|b_n|^3} \leq n^{3/2-\epsilon}$. Then
    \[ \frac{X_n - \E{X_n}}{\sqrt{\va[X_n]}} \xrightarrow{\mathrm d} \mathcal{N}(0,1). \]
\end{prop}
A few remarks are in order. The usual contraction \cite{neininger2004general} does not apply as our limit equation turns out to be degenerate. With our choice of beta-splitting distribution \eqref{eq:qmi}, $I_n$ is such that $\sqrt{{I_n}/{n}} \to 1$ and $\sqrt{{(n-I_n)}/{n}} \to 0$; in particular $I_n$ always captures the bulk of the variance. 
Typically, this degeneracy occurs in the case where $X_n$ has poly-logarithmic variance. In our case, we have linear mean and variance, and the degeneracy arises from the harmonic splitting of the tree. Thus, the usual degenerate form of the contraction method \cite{NR:04} does not apply directly either. 
This difficulty involving degeneracy for the critical beta-splitting tree was observed by Kolesnik~\cite{kolesnik2025critical}, whose work adapts the contraction method to prove a central limit theorem for the height of a random leaf in the tree. His use of contraction relies on the height depending on only one side of the split and the variance being poly-logarithmic, so it is not applicable to our setting either. Nevertheless, we are also able to adapt the contraction method to our setting; in section~\ref{subsec:proof-ideas} below, we discuss the differences in our application.

For the discrete model, in addition to $N_n(k)$ the number of clades of size $k$ in DTCS$(n)$, we can, for a fixed clade $\chi$, i.e., a fixed fringe subtree $\chi$, define $N_n(\chi)$ to be the number of copies of $\chi$ in DTCS($n$). We only require~\ref{thm:x_n-decreasing}--\ref{thm:bootstrap} from Theorem~\ref{thm:exponent} to resolve the following. 

\begin{cor}[Central limit theorem for $N_n(k)$ and $N_n(\chi)$] \label{cor:CLT-fringe}
For any $k \in \N$, we have
    \[ \frac{N_n(k) - \E{N_n(k)}}{\sqrt{\va(N_n(k))}} \xrightarrow{\mathrm d} \NN(0,1). \]
A central limit theorem also holds for $N_n(\chi)$ for all $\chi$ of size $|\chi| \in \N$.
\end{cor}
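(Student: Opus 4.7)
The plan is to derive the CLT from the distributional recursion satisfied by $N_n(k)$ (and $N_n(\chi)$), combining the bounds from Theorem~\ref{thm:exponent}\ref{thm:x_n-decreasing}--\ref{thm:bootstrap} with a general CLT for random variables defined by such recursions. Conditioning on the root split $L_n \sim q_n(\cdot)$ yields the distributional recursion
\[ N_n(k) \eqdist \1_{\{n=k\}} + N_{L_n}^{(1)}(k) + N_{n-L_n}^{(2)}(k), \]
with $N^{(1)}, N^{(2)}$ independent copies of the process on the two subtrees; the analogous recursion with toll $\1_{\{T_n \cong \chi\}}$ governs $N_n(\chi)$, where $T_n \sim$ DTCS$(n)$.

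I would first pin down the variance asymptotics. From \eqref{eq:total-expectation} and Theorem~\ref{thm:exponent}\ref{thm:x_n-simple-rate} applied with $x_k = 1/k$, one obtains $e_n \ceq \E{N_n(k)} = a(k) n / k + O(1)$. Taking variances and using the conditional independence of the two subtrees given $L_n$ gives
\[ v_n = 2 \sum_{i=1}^{n-1} q_n(i) v_i + s_n, \qquad s_n \ceq \va_{L_n}\bigl(e_{L_n} + e_{n-L_n}\bigr), \]
where $v_n \ceq \va(N_n(k))$ and $s_n = O(1)$ follows from the uniform $O(1)$ bound on $e_i - a(k) i/k$. Using the identity $\sum_i q_n(i) i = n/2$, the normalized sequence $\tilde v_n \ceq v_n / n$ satisfies
\[ \tilde v_n = \sum_{i=1}^{n-1} \frac{\tilde v_i}{h_{n-1}(n-i)} + O(1/n), \]
i.e., a small perturbation of \eqref{eq:recursion}. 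A direct induction shows $v_n = O(n)$, and Theorem~\ref{thm:exponent}\ref{thm:x_n-decreasing}--\ref{thm:bootstrap} applied to this perturbed recursion yields $\tilde v_n \to \sigma^2(k)$ for some $\sigma^2(k) > 0$; positivity is verified via an explicit lower bound on the first-generation contribution, using that with probability bounded below the root split produces two subtrees each large enough to independently contain a clade of size $k$.

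Finally, with $v_n \sim \sigma^2(k) n$ and a bounded $\{0,1\}$-valued toll in hand, the Gaussian limit comes from a general CLT for recursive random variables. The main obstacle is that the standard contraction method of R\"osler and Neininger--R\"uschendorf is not immediately applicable here, since $L_n/n$ concentrates near $\{0, 1\}$ on a logarithmic scale and has no non-degenerate scaling limit. To circumvent this I would apply a martingale CLT to the Doob martingale $M_j \ceq \E{N_n(k) \mid \F_j}$, where $\F_j$ is generated by the clade sizes at the first $j$ internal vertices revealed in BFS order; the increments $M_j - M_{j-1}$ are bounded (by the $O(1)$ control on $e_i - a(k) i/k$) and linear total variance gives the Lindeberg condition. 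The argument for $N_n(\chi)$ is identical with $\1_{\{T_n \cong \chi\}}$ in place of $\1_{\{n=k\}}$; the variance analysis goes through verbatim and the martingale increments remain bounded since each toll is an indicator.
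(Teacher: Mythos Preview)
Your mean and variance outline matches the paper's Lemmas~\ref{lem: expectation}--\ref{lem: variance}, but there is a real gap in the variance step: you invoke Theorem~\ref{thm:exponent}\ref{thm:x_n-decreasing}--\ref{thm:bootstrap} on the ``perturbed'' recursion $\tilde v_n = \sum_i \tilde v_i/(h_{n-1}(n-i)) + O(1/n)$, but that theorem is proved only for the homogeneous recursion~\eqref{eq:recursion} with a single nonzero initial value. The paper resolves this by linearity, writing $V_n=\sum_{m\ge 0}V_n^{(m)}$ where each $V_n^{(m)}$ is a homogeneous recursion seeded at step $m$ with initial value $\epsilon_m$; Theorem~\ref{thm:exponent} is then applied term by term, and items~\ref{thm:x_n-simple-rate}--\ref{thm:bootstrap} are used (with three ranges of $m$) to control both the summability of the limits $\sigma_m^2$ and the uniformity of the errors across $m$. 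This superposition is the actual content of Lemma~\ref{lem: variance} and is not something you can simply assert.

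The larger divergence is the final CLT step. The paper \emph{does} apply the Neininger--R\"uschendorf contraction theorem (Theorem~\ref{thm:NR}) directly, with $f(n)=\mu n$, $g(n)=\sigma^2 n$, $b_n=0$, and limiting coefficients $A_1^*=B$, $A_2^*=1-B$ for $B\sim\Ber(1/2)$ coming from $\sqrt{L_n/n}\to B$ in $L^3$; it does not share your view that the $\{0,1\}$-valued limit of $L_n/n$ obstructs the method. Your martingale-CLT alternative is a reasonable instinct, but it is incomplete as stated: bounded increments together with $\va(N_n(k))\sim\sigma^2 n$ give the Lindeberg condition, but you have not addressed the conditional-variance stabilization $\sum_j\E{(M_j-M_{j-1})^2\mid\F_{j-1}}/\va(N_n(k))\to 1$ in probability, which is the crux of any martingale CLT and does not follow from the unconditional variance asymptotics alone. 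Establishing it would essentially require redoing the linear-variance analysis at the level of the random conditional sums, so even if one prefers to avoid the contraction machinery, the argument you sketch is not yet a proof.
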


For the continuous model, let $\Lambda_n$ denote the total length of CTCS$(n)$, where the length of an edge above a clade of size $m$ corresponds to the duration of time Exp($h_{m-1}$) before it splits (for an illustration, see \cite[Figure~3]{aldous2023critical}). It is known that $\lim_{n\to\infty} n^{-1} \E {\Lambda_n} = 6/\pi^2$.

\begin{cor}[Central limit theorem for $\Lambda_n$]\label{cor:CLT-length}
    We have
    \[ \frac{\Lambda_n - \E{\Lambda_n}}{\sqrt{\va(\Lambda_n)}} \xrightarrow{\mathrm d} \NN(0,1). \]
\end{cor}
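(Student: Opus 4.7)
The plan is to exploit the distributional recursion
\[
\Lambda_n \stackrel{\mathrm d}{=} E_n + \Lambda^{(1)}_{L_n} + \Lambda^{(2)}_{n-L_n},
\]
where $E_n \sim \mathrm{Exp}(h_{n-1})$, $L_n \sim q_n$, and $\Lambda^{(1)}, \Lambda^{(2)}$ are independent copies of the process (also independent of the splitting pair $(E_n, L_n)$), and to feed it into a general Gaussian limit theorem for recursively-defined distributions, in the spirit of the contraction method of Neininger-R\"uschendorf in its degenerate-equation form.

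\textbf{Step 1: sharp asymptotics for mean and variance.} Conditional on the tree shape, $\Lambda_n$ is a sum of independent exponentials, one per internal clade, so
\[
\mu_n \ceq \E{\Lambda_n} = \sum_{k=2}^n \frac{\E{N_n(k)}}{h_{k-1}} = n \sum_{k=2}^n \frac{a(n,k)}{k\,h_{k-1}}.
\]
Using the identity $\sum_{k \ge 2} a(k)/(k h_{k-1}) = 6/\pi^2$ and the rate $a(n,k) - a(k) = O_\epsilon((n/k)^{-\gamma_* + \epsilon})$ from Theorem~\ref{thm:exponent}\ref{thm:exponent-growing-k} (combined with trivial bounds in the remaining range $k \gtrsim n^{1-o(1)}$), this yields $\mu_n = (6/\pi^2)\,n + \eta_n$ with a polynomial bound on the error $\eta_n$. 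Substituting into the variance recursion
\[
v_n \ceq \va(\Lambda_n) = h_{n-1}^{-2} + 2\,\E{v_{L_n}} + \va\bigl(\mu_{L_n} + \mu_{n-L_n}\bigr)
\]
and using the bound on $\eta_n$ to control the rightmost term, I would establish $v_n = \sigma^2 n (1 + o(1))$ for some $\sigma^2 > 0$; positivity is inherited from the strictly positive toll-variances $h_{n-1}^{-2}$.

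\textbf{Step 2: apply the contraction method.} Normalize by $Y_n \ceq (\Lambda_n - \mu_n)/\sqrt{v_n}$ to get
\[
Y_n \stackrel{\mathrm d}{=} \sqrt{v_{L_n}/v_n}\,Y^{(1)}_{L_n} + \sqrt{v_{n-L_n}/v_n}\,Y^{(2)}_{n-L_n} + b_n,
\]
where the toll $b_n$, assembled from the mean- and variance-expansions of Step~1, has mean $0$ and $L^2$-norm $o(1)$. The weights satisfy $v_{L_n}/v_n + v_{n-L_n}/v_n \to 1$ in $L^1$ by Step~1, placing us in the degenerate regime of the contraction method. In this regime the Gaussian is the unique fixed point of the associated distributional equation in the Zolotarev $\zeta_2$-metric, yielding $Y_n \xrightarrow{\mathrm d} \NN(0,1)$.

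\textbf{Main obstacle.} The bottleneck is Step~1: closing the variance recursion requires the mean-fluctuation term $\va(\mu_{L_n} + \mu_{n-L_n})$ to be $o(n)$, which forces the error $\eta_n$ to be small in an averaged sense (roughly, $L^2$-norm under $q_n$ of order $o(\sqrt n)$). This is exactly where the sharp exponent $\gamma_* > 3/2$ from Theorem~\ref{thm:exponent} is essential, and where most of the effort lies; Step~2 is then a relatively routine verification of general contraction-method hypotheses.
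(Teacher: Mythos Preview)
Your plan is exactly the paper's: obtain $\E{\Lambda_n} = \mu_\Lambda n + O(n^{\alpha})$ with $\alpha < 1/2$ from the clade-size decomposition and the rate bounds of Theorem~\ref{thm:exponent}, feed this into the law-of-total-variance recursion to get $\va(\Lambda_n) = \sigma_\Lambda^2 n + o(n)$, and conclude via the Neininger--R\"uschendorf degenerate-case contraction theorem. The paper executes your Step~1 by splitting the range of $k$ into three pieces and applying parts~\ref{thm:bootstrap} and~\ref{thm:exponent-growing-k} of Theorem~\ref{thm:exponent} separately on each, obtaining $\alpha = 0.41$; for the variance it decomposes $\va(\Lambda_n)/n$ as a superposition of harmonic-descent recursions indexed by the step at which each toll term enters, just as in Lemma~\ref{lem: variance}. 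Your identification of the mean error as the bottleneck is correct, though the precise constraint is not literally ``$\gamma_* > 3/2$'': the paper's $O(n^{0.41})$ comes from balancing the three ranges, and part~\ref{thm:bootstrap} is as important as the exponent in~\ref{thm:exponent-growing-k}.

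There is one genuine technical slip in Step~2. In the degenerate regime where the limiting coefficients satisfy $\sum_r (A_r^*)^2 = 1$, the recursive map is \emph{not} a contraction in the Zolotarev $\zeta_2$ metric: the Lipschitz factor is exactly $\E{\sum_r (A_r^*)^2} = 1$, so the Gaussian is not isolated as the unique fixed point there, and the argument as you wrote it does not close. The Neininger--R\"uschendorf result (Theorem~\ref{thm:NR} in the paper) works in $\zeta_s$ for some $2 < s \le 3$; this is why the paper verifies the toll condition~\eqref{eq:NR-1b} in $L^3$, using $\E{\mathrm{Exp}(h_{n-1})^3} = 6/h_{n-1}^3$. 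Your $L^2$ control of $b_n$ must therefore be upgraded to $L^s$ for some $s > 2$, which is easy here but cannot be omitted.
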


\subsection{Proof ideas} \label{subsec:proof-ideas}
We take a hands-on approach to proving Theorem~\ref{thm:exponent}. The key transformation is to turn the recursive relation on the sequence $(x_n)_{n \in \N}$ into a relation on the consecutive differences $(x_n - x_{n+1})_{n \in \N}$. We show inductively that these consecutive differences are always positive. This proves~\ref{thm:x_n-decreasing}.  Using the explicit form of the recursive relation on consecutive differences, we also deduce~\ref{thm:x_n-simple-rate} and~\ref{thm:bootstrap} inductively. 

To prove parts~\ref{thm:exponent-fixed-k} and~\ref{thm:exponent-growing-k}, we approximate the recursive relation on consecutive differences by an integral. With the assumption that $x_n - x_{n+1} \sim n^{-\gamma}$, this integral yields an equation governing $\gamma$, which is precisely the equation defining the exponent $\gamma_*$. A key difficulty in this step is that the integral is divergent. The equation arises as a result of comparing the divergent integral to the growth of $h_n$. This necessitates understanding the error of the integral approximation, which could a priori be diverging as well. The upper and lower bounds are once again proven inductively with the help of the integral approximation. 

Our central limit theorems follow from the heuristic that if two random variables are close to Gaussian, then their sum is closer to Gaussian. Inductively, this implies that a recursively defined statistic of the beta-splitting tree is close to a mixture of Gaussian distributions. In order to obtain a central limit theorem, we need to check that the mean and variance of the statistic both scale linearly, so that the mixture of Gaussians also follows a Gaussian distribution. The analysis of the harmonic descent chain verifies this for several variables of interest. To formalize the argument we rely on a general strategy of Neininger and R\"uschendorf \cite{NR:04} which captures this heuristic. 

Compared to the work of Kolesnik~\cite{kolesnik2025critical}, our argument differs in one primary aspect. The height of a random leaf in the tree has poly-logarithmic variance, so his main bottleneck arises from computing the mean and variance to sufficient precision. The linear mean and variance of our random variables sidestep this difficulty. However, while the height of the tree depends only on a single side of the split, our random variables depend on both sides of the split, which increases the complexity of the recurrence. Our difficulty arises primarily from handling the contributions from both sides of the split, and making the appropriate intermediate definitions to carry out the contraction.

\subsection{Organization}
We prove our main technical results, Proposition~\ref{prop:x_n-decreasing} and Theorem~\ref{thm:exponent}, in section~\ref{sec: main thm}. We apply the contraction method to prove Proposition~\ref{prop: contraction} in section~\ref{sec: contraction}. Finally, we deduce our central limit theorems, Corollary~\ref{cor:CLT-fringe} and Corollary~\ref{cor:CLT-length}, in sections~\ref{sec: clt1} and~\ref{sec: clt2} respectively. 

\subsection*{Acknowledgements}
We thank Svante Janson for pointing out an error in an earlier version of the paper and for referring us to the work of Kolesnik~\cite{kolesnik2025critical}. AB is supported by NSF GRFP 2141064 and NSERC. BC is supported by NSF GRFP 2141064 and Simons Investigator Award 622132 to EM. EM is supported by Vannevar Bush Faculty Fellowship ONR-N00014-20-1-2826 and Simons Investigator Award 622132. 

\section{Proof of the main technical theorem}\label{sec: main thm}
In this section we prove our main results about the recursion~\eqref{eq:recursion}.

\subsection{Monotonicity}
We begin with the more general Proposition~\ref{prop:x_n-decreasing}. The proof proceeds by rewriting the recursion in terms of the consecutive differences $(x_n - x_{n+1})_{n \in \N}$. This nearly gives what we want, as all of the terms in the new recursion are positive except one. We get around this by taking a linear combination of recursions to eliminate the negative term. It then suffices to check that the other coefficients in the linear combination remain positive, relying on the imposed monotonicity condition.

\begin{proof}[Proof of Proposition~\ref{prop:x_n-decreasing}]
    Let $s(n,i) = \sum_{j=1}^i p(n,j)$ for $1 \leq i \leq n-1$. Using summation by parts, we can write that for $n > k$, 
    \begin{equation}\label{eq:x_n-decomp}
        x_n = x_{n-1} + \sum_{i=1}^{n-2} s(n, i)(x_i-x_{i+1}).
    \end{equation}
    Notice that the terms corresponding to $i=1, \ldots, k-2$ in the sum are all 0 due to the initial conditions. We can thus isolate the $(x_{k-1}-x_k)$ term to write 
    \[ 0 = (x_{n-1}-x_n)\frac{1}{s(n, k-1)} + \sum_{i=k}^{n-2}(x_i-x_{i+1})\frac{s(n, i)}{s(n, k-1)} + (x_{k-1}-x_k) \]
    and 
    \[ 0 = (x_n-x_{n+1})\frac{1}{s(n+1, k-1)} + \sum_{i=k}^{n-1}(x_i-x_{i+1})\frac{s(n+1, i)}{s(n+1, k-1)} + (x_{k-1}-x_k) \]
    for all $n \geq k$. Subtracting the first from the second of the above equations and letting 
    \begin{equation}\label{eq:alpha-def}
        \alpha_{i,n} = \frac{s(n+1, i)}{s(n+1, k-1)} - \frac{s(n,i)}{s(n, k-1)}        
    \end{equation}
    for each $k \leq i \leq n-1$, we have 
    \begin{equation}\label{eq:exact-diff}
        0 = (x_n - x_{n+1})s(n+1, k-1)^{-1} + \sum_{i=k}^{n-1}(x_i - x_{i+1})\alpha_{i,n}.
    \end{equation}
    Assuming that $\alpha_{i,n} < 0$ we can conclude as follows. Inductively, if $x_i - x_{i+1} > 0$ for each $k \leq i \leq n-1$, we deduce that $x_n-x_{n+1} > 0$. Therefore, we must have $x_n-x_{n+1} > 0$ for each $n \geq k$, completing the proof. 

    It therefore remains to prove that $\alpha_{i,n} < 0$ for each $k \leq i \leq n-1$. This is equivalent to 
    \[ s(n+1, i)s(n, k-1) < s(n,i)s(n+1, k-1). \]
    Both sides of the inequality when expanded have $i(k-1)$ terms. The difference in corresponding terms (left minus right hand side) is 
    \[ p(n+1,x)p(n,y) - p(n,x)p(n+1, y) \]
    for $0 \leq x \leq i-1$ and $0 \leq y \leq k-2$. Notice that for $0 \leq a,b\leq k-2$ the difference for the $x=a$ and $y=b$ term cancels exactly with the difference for the $x=b$ and $y=a$ term. Here we rely on the fact that $i \geq k$ so that all terms where $y > x$ are cancelled. The remaining differences are for $x \geq y$, which are negative since we assumed 
    \[ \frac{p(n, x)}{p(n+1, x)} \geq \frac{p(n,y)}{p(n+1, y)} \]
    for all $x > y$. This proves $\alpha_{i,n} < 0$. 
\end{proof}

\begin{proof}[Proof of Theorem~\ref{thm:exponent}\ref{thm:x_n-decreasing}]
    We check the conditions of Proposition~\ref{prop:x_n-decreasing} for $p(n,i) = \frac{1}{h_{n-1}(n-i)}$. It is clear that $\sum_{i=1}^{n-1} p(n,i) = 1$. It turns out to be simpler (and advantageous for future calculations) to check by definition that $\alpha_{i,n} < 0$ directly. Recall that we wish to show 
    \[ p(n+1,x)p(n,y) - p(n,x)p(n+1,y) < 0 \]
    for $x > y$. In this case we can write explicitly that this is equivalent to
    \begin{equation}\label{eq: diff}
        \frac{1}{n-x}\cdot \frac{1}{n-y-1} - \frac{1}{n-x-1}\cdot \frac{1}{n-y} = \frac{y-x}{(n-x)(n-x-1)(n-y)(n-y-1)}
    \end{equation}
    which is clearly negative for $x > y$.   
    \end{proof}

    \subsection{Long-term error estimate}
    We next prove~\ref{thm:x_n-simple-rate}. The idea is to notice that the single negative term from the proof of Proposition~\ref{prop:x_n-decreasing} must control all of the positive terms. This gives a bound on the rate of decay of the tail end of the sequence. 
    \begin{proof}[Proof of Theorem~\ref{thm:exponent}\ref{thm:x_n-simple-rate}]
    We return to \eqref{eq:x_n-decomp}. Note that in this setting, $s(n,i) = \frac{h_{n-1} - h_{n-i-1}}{h_{n-1}}$. For all $n > k$, since $x_n - x_{n-1} < 0$, we have 
    \[
    h_{n-1}(x_n - x_{n-1}) = \sum_{i=1}^{n-2} (h_{n-1}-h_{n-i-1}) (x_i - x_{i+1}) < 0,\] 
    that is, the negative contribution from $x_{k-1} - x_k = -x_k$ must outweigh all the other positive terms: 
    \[\sum_{i=k}^{n-2} (h_{n-1}-h_{n-i-1}) (x_i - x_{i+1}) < (h_{n-1} - h_{n-k})x_k \leq \frac{k}{n-k} x_k .\]
    For $n/2 \leq i \leq n-2$, we have $h_{n-1} - h_{n-i-1} \geq h_{n-1} - h_{n/2-1} \geq \log 2$, so for any $n$ such that $n/2 \geq k$, we can take terms $i = n/2, \dots, n-2$ of the sum on the left hand side to obtain
    \[
    x_{n/2} - x_n < \frac{1}{\log 2} \frac{k}{n-k} x_k.
    \]
    Writing $x_n - x = (x_n - x_{2n}) + (x_{2n} - x_{4n}) + \cdots $ proves the statement. 
\end{proof}

\subsection{Short-term error estimate}
To prove~\ref{thm:bootstrap}, we require bounds on the initial terms of the sequence. We leverage the fact that the contributions arising from the first positive term $x_k$ are decaying quickly to inductively show that the sequence itself must be decaying quickly as well. This induction reaches a bottleneck after the first $k^{1/2-\epsilon}$ terms, but we bootstrap the argument to extend it to the first $k^{1-\epsilon}$ terms at the cost of constant factors.
\begin{proof}[Proof of Theorem~\ref{thm:exponent}\ref{thm:bootstrap}]
    We proceed by double induction on $\ell$ and $m$. For the base case, we fix $\ell=1$ and prove 
    \begin{equation}\label{eq:pre-bootstrap}
        x_{k+m} \leq \frac{cx_k}{mh_k} \ \text{ for all } \ m \leq k^{1/2 - 1/(c+1)}.
    \end{equation}
    For the base case $m=1$, the recursive equation gives $x_{k+1} = \frac{x_k}{h_k} \leq \frac{1}{kh_k} < \frac{c}{kh_k}$. Proceeding by induction over $m$,
    \begin{align*}
        x_{k+m+1} &= \sum_{i=1}^{k+m} \frac{1}{h_{k+m}(k+m+1-i)} \cdot x_i \\
        &\leq \frac{x_k}{(m+1)h_k} + \sum_{i=1}^{m} \frac{1}{h_k(m+1-i)} \cdot \frac{cx_k}{ih_k} \\
        &= \frac{x_k}{(m+1)h_k} + \frac{cx_k}{h_k^2}\sum_{i=1}^m \frac{1}{i(m+1-i)} \\
        &= \frac{x_k}{(m+1)h_k} + \frac{cx_k}{h_k^2}\cdot \frac{2h_{m}}{m+1} \\
        &\leq \frac{cx_k}{(m+1)h_k}
    \end{align*}
    where the last inequality holds as long as $2ch_m \leq (c-1)h_k$, which is satisfied for $m \leq k^{1/2 - 1/(c+1)}$ and $k$ sufficiently large. This proves \eqref{eq:pre-bootstrap}. 

    We now proceed with the induction on $\ell$. 
    Suppose we have shown the statement for some $\ell \in \N$. We prove the statement for $\ell+1$ via induction on $m$:
    \begin{align*}
        x_{k+m+1} &= \sum_{i=1}^{k+m} \frac{1}{h_{k+m}(k+m+1-i)}\cdot x_i \\
        &\leq \frac{x_k}{(m+1)h_k} + \sum_{i=1}^{k^{d_\ell}} \frac{1}{h_k(m+1-i)} \cdot \frac{c^\ell x_k}{ih_k} + \sum_{i=k^{d_\ell}}^m \frac{1}{h_k(m+1-i)}\cdot \frac{c^{\ell+1}x_k}{ih_k} \\
        &= \frac{x_k}{(m+1)h_k}\left( 1 + \frac{c^\ell(h_{k^{d_\ell}} + h_m - h_{m-k^{d_\ell}})}{h_k} + \frac{c^{\ell+1}(h_{m-k^{d_\ell}} + h_m - h_{k^{d_\ell}})}{h_k} \right) \\
        &\leq \frac{x_k}{(m+)h_k}\left(1 + \frac{2c^{\ell+1}h_m - (c^{\ell+1}-c^\ell)h_{k^{d_\ell}}}{h_k}\right) \\
        &< \frac{c^{\ell+1}x_k}{(m+1)h_k},
    \end{align*}
    where the last inequality holds as long as $2c^{\ell+1}h_m - (c^{\ell+1}-c^\ell)h_{k^{d_\ell}} < (c^{\ell+1}-1)h_k$. This condition reduces to 
    \[ d_{\ell+1} < \frac{1}{2} - \frac{1}{2c^{\ell+1}} + \frac{c-1}{2c}d_\ell. \]
    By the inductive hypothesis that $d_\ell = 1 - \frac{1}{c+1} - 2^{-\ell}$ we can take
    \begin{align*}
        d_{\ell+1} &< \frac{1}{2} - \frac{1}{2c^{\ell+1}} + \frac{c-1}{2c}\left(1 - \frac{1}{c+1} - 2^{-\ell}\right) \\
        &= \frac{1}{2} - \frac{1}{2c^{\ell+1}} + \frac{1}{2} - \frac{1}{c+1} - \frac{c-1}{2c}2^{-\ell} \\
        &= 1 - \frac{1}{c+1} - 2^{-(\ell+1)} - \frac{1}{2c^{\ell+1}} + \frac{1}{2^{\ell+1}c};
    \end{align*}
    in particular $d_{\ell+1} = 1 - \frac{1}{c+1} - 2^{-(\ell+1)}$ as long as $c > 2$. 
\end{proof}

\subsection{Asymptotic exponent of error}\label{subsec:exponent-proof}
Finally, to prove the last two parts we rely on an integral approximation to precisely estimate \eqref{eq:exact-diff}. We give near exact expressions for the coefficients $\alpha_{i,n}$, estimate the sum via multiple applications of the Euler--Maclaurin formula (in Lemma~\ref{lem: EM} below), and inductively prove tighter upper and lower bounds for the consecutive differences. Summing the differences over the tail of the sequence yields the desired estimates. These steps turn out to be sharp in the exponent for $k$ fixed.
\begin{proof}[Proof of Theorem~\ref{thm:exponent}\ref{thm:exponent-fixed-k}--\ref{thm:exponent-growing-k}]
    For each $n \geq k$, let $d_n \ceq x_n - x_{n+1}$. Then, from \eqref{eq:exact-diff}, we have 
    \[
    d_n = s(n+1, k-1) \sum_{i=k}^{n-1} (-\alpha_{i,n}) d_i = \frac{h_n - h_{n-k+1}}{h_n} \sum_{i=k}^{n-1} (-\alpha_{i,n}) d_i. 
    \]
    Using \eqref{eq: diff} to expand $\alpha_{i,n}$ as given in \eqref{eq:alpha-def}, we can write 
    \[ 
        -\alpha_{i,n} = \sum_{a=k-1}^{i-1}\sum_{b=0}^{k-2} \frac{a-b}{(n-a)(n-a-1)(n-b)(n-b-1)} \cdot \frac{1}{(h_n-h_{n-k+1})(h_{n-1}-h_{n-k})}. 
    \]
    Let $\alpha_{i,n}' = (h_n - h_{n-k+1}) (-\alpha_{i,n})$ such that
    \begin{equation}\label{eq:d_n-exact}
        d_n = \frac{1}{h_n}\sum_{i=k}^{n-1} \alpha_{i,n}' d_i.
    \end{equation}
    Using that $\sum_{b=x}^{y} \frac{1}{(n-b)(n-b-1)} = \sum_{b=x}^{y} \big(\frac{1}{n-b-1} - \frac{1}{n-b} \big) 
    = \frac{y-x+1}{(n-x)(n-y-1)}$ for any $x < y$
    and $\frac{b}{(n-b)(n-b-1)} = \frac{n}{(n-b)(n-b-1)} - \frac{1}{n-b-1}$,
    we can write for each $k \leq i \leq n-1$ that
    \[ (h_n-h_{n-k+1})(h_{n-1}-h_{n-k})(-\alpha_{i,n}) := A - B,
    \]
    where 
    \[
    \begin{aligned}
        A &= \sum_{a=k-1}^{i-1} \frac{a}{(n-a)(n-a-1)}\sum_{b=0}^{k-2}\frac{1}{(n-b)(n-b-1)} \\
        &= \bpar{\frac{n(i-k+1)}{(n-i)(n-k+1)} - \sum_{a=k-1}^{i-1} \frac{1}{n-a-1}} \frac{k-1}{n(n-k+1)}
    \end{aligned}
    \]
    and 
    \[
    \begin{aligned}
        B &= \sum_{a=k-1}^{i-1}\frac{1}{(n-a)(n-a-1)} \sum_{b=0}^{k-2}\frac{b}{(n-b)(n-b-1)} \\
        &= \frac{i-k+1}{(n-i)(n-k+1)} \bpar{\frac{n(k-1)}{n(n-k+1)} - \sum_{b=0}^{k-2} \frac{1}{n-b-1}}.
    \end{aligned}
    \]
    Noting the cancellation between the first terms of $A$ and $B$, we end up with 
    \begin{equation*}
        A-B = \frac{i-k+1}{(n-i)(n-k+1)} (h_{n-1} - h_{n-k}) - (h_{n-k} - h_{n-i-1}) \frac{k-1}{n(n-k+1)},
    \end{equation*}
    giving the exact expression 
    \begin{equation}
        \alpha_{i,n}' = \frac{i-k+1}{(n-i)(n-k+1)} - \frac{k-1}{n(n-k+1)} \frac{1}{h_{n-1} - h_{n-k}} (h_{n-k} - h_{n-i-1}).
    \end{equation}
    Using $\frac{k-1}{n-1}\leq h_{n-1} - h_{n-k} \leq \frac{k-1}{n-k+1}$, we can bound $d_n$ from above and below by 
    \begin{align*}
        d_n &\leq \frac{1}{h_n} \frac{1}{n-k+1} \sum_{i=k}^{n-1} \bpar{ \frac{i-k+1}{n-i} - \frac{n-k+1}{n} (h_{n-k} - h_{n-i-1}) } d_i 
        \\ 
        &=  \frac{1}{h_n} \sum_{i=k}^{n-1} \frac{1}{n} \bpar{ \frac{n}{n-k+1} \frac{i-k+1}{n-i} - (h_{n-k} - h_{n-i-1}) } d_i
    \end{align*}
    and 
    \[\begin{aligned}
        d_n &\geq \frac{1}{h_n} \frac{1}{n-k+1} \sum_{i=k}^{n-1} \bpar{ \frac{i-k+1}{n-i} - \frac{n-1}{n} (h_{n-k} - h_{n-i-1})} d_i \\
        &\geq \frac{1}{h_n} \frac{1}{n-k+1} \sum_{i=k}^{n-1} \bpar{ \frac{i-k+1}{n-i} - (h_{n-k} - h_{n-i-1})} d_i .
    \end{aligned}\]
    Assume the scaling $d_n \sim n^{-\gamma}$. For the sake of induction, we hope to show that 
    \begin{align}
        \frac{1}{n}
        \frac{1}{h_n}\sum_{i=k}^{n-1} \left(\frac{n(i-k+1)}{(n-i)(n-k+1)} - (h_{n-k} - h_{n-i-1})\right) i^{-\gamma} &\leq n^{-\gamma}, \nonumber \\ 
        \text{i.e.,}\quad h_n - \frac{1}{n} \sum_{i=k}^{n-1} \left(\frac{n(i-k+1)}{(n-i)(n-k+1)} - (h_{n-k} - h_{n-i-1})\right) \left(\frac{n}{i}\right)^\gamma &\geq 0
        \label{eq:to-show-leq-1} 
    \end{align} 
    for $\gamma < \gamma_*+1$ and 
    \begin{align}
        \frac{1}{n-k+1}
        \frac{1}{h_n}\sum_{i=k}^{n-1} \left(\frac{i-k+1}{n-i} - (h_{n-k} - h_{n-i-1})\right) i^{-\gamma} &\geq n^{-\gamma}, \nonumber \\
        \text{i.e.,}\quad h_n - \frac{1}{n-k+1} \sum_{i=k}^{n-1} \left(\frac{i-k+1}{n-i} - (h_{n-k} - h_{n-i-1})\right) \left(\frac{n}{i}\right)^\gamma &\leq 0 \label{eq:to-show-geq-1}
    \end{align}
    for $\gamma > \gamma_*+1$. 
    The sums in \eqref{eq:to-show-leq-1} and \eqref{eq:to-show-geq-1} resemble each other, so we adjust them at a cost of $o(1)$
    to be the same. We remark that an error of $o(1)$ rather than $o(h_n)$ is crucial as it will turn out that the left hand sides are of constant order. We show that both sums can be written as 
    \begin{equation} \label{eq:riemann-sum}
    \sum_{i=k}^{n-1} \left(\frac{i/n}{1-i/n} - \log\left(1 + \frac{i/n}{1-i/n}\right)\right)\left(\frac{n}{i}\right)^\gamma \cdot \frac{1}{n} 
    \ + \ o(1).
    \end{equation}
    For \eqref{eq:to-show-leq-1}, for each $k \leq i \leq n-1$ consider the difference 
    \[ \left(\frac{n(i-k+1)}{(n-i)(n-k+1)} - (h_{n-k} - h_{n-i-1})\right) - \left( \frac{i}{n-i} - \log\left(\frac{n}{n-i}\right)\right), \]
    which equals 
    \[ \log\left(\frac{n}{n-i}\right) - \frac{k-1}{n-k+1} - \frac{1}{n-k} - \cdots - \frac{1}{n-i}. \]
    By integral approximation, we know that $\log(\frac{n}{n-i})$ differs from $\frac{1}{n-1} + \cdots + \frac{1}{n-i}$ by at most $\frac{1}{n-i} - \frac{1}{n} = \frac{i}{n(n-i)}$. Thus, this difference is at most $\frac{i}{n(n-i)} + \frac{k^2}{n^2}$. Summing over $i$ from $k$ to $n-1$, the total difference over all terms between the sum in \eqref{eq:to-show-leq-1} and the term in \eqref{eq:riemann-sum} is at most
    \begin{align*}
        \sum_{i=k}^{n-1} \left(\frac{i}{n(n-i)} + \frac{k^2}{n^2}\right)\left(\frac{n}{i}\right)^\gamma \cdot \frac{1}{n} &= \frac{1}{n}\sum_{i=k}^{n-1}\frac{1}{n-i}\left(\frac{n}{i}\right)^{\gamma-1} + \frac{k^2}{n^{3-\gamma}} \sum_{i=k}^{n-1} i^{-\gamma} \\
        &\leq \frac{k^{2-\gamma}}{n^{2-\gamma}}\sum_{i=k}^{n-1} \frac{1}{(n-i)i} + \frac{k^{3-\gamma}}{n^{3-\gamma}} \\
        &\leq 2 h_n \frac{k^{2-\gamma}}{n^{3-\gamma}} + \frac{k^{3-\gamma}}{n^{3-\gamma}}
    \end{align*}
    which is $o(1)$ as long as $1 < \gamma < 3$ and $k = o(n)$. Similarly, the total difference over all terms of 
    \[
    \bpar{ \frac{i-k+1}{n-i} - (h_{n-k} - h_{n-i-1})} - \bpar{ \frac{i}{n-i} - \log \Big(\frac{n}{n-i} \Big)}
    \]
    summed in \eqref{eq:to-show-geq-1} is $o(1)$ as long as $k = o({n}/{\log^{1/(3-\gamma)} n})$, as each difference is at most $\frac{ki}{n(n-i)}$. Thus we can replace the sums in both \eqref{eq:to-show-leq-1} and \eqref{eq:to-show-geq-1} with \eqref{eq:riemann-sum}. 
    Let \[j_n \ceq h_n - \sum_{i=k}^{n-1} \left(\frac{i/n}{1-i/n} - \log\left(1 + \frac{i/n}{1-i/n}\right)\right)\left(\frac{n}{i}\right)^\gamma \cdot \frac{1}{n}.\] 
    Lemma~\ref{lem: EM} below confirms that for sufficiently large $n$, \eqref{eq:to-show-leq-1} and \eqref{eq:to-show-geq-1} are equivalent to
    \[ \sum_{i=1}^\infty \left(\frac{1}{i} - \frac{i}{(i+1)(i+2-\gamma)}\right) > 0 \ \ad \ \sum_{i=1}^\infty \left(\frac{1}{i} - \frac{i}{(i+1)(i+2-\gamma)}\right) < 0, \]
    which hold respectively for $\gamma<\gamma_*+1$ and $\gamma>\gamma_*+1$.
    So we may proceed with the induction.

    Fix $\epsilon > 0$ arbitrary. For $n_0 = n_0(\epsilon)$ sufficiently large, we have due to \eqref{eq:to-show-leq-1} that
    \[ \frac{1}{h_n} \sum_{i=k}^{n-1} i^{-\gamma_*-1 + \epsilon} \alpha_{i,n}' \leq n^{-\gamma_* -1 + \epsilon}  \]
    for all $n \geq n_0$. Choose $C = C(\epsilon)$ large enough so that $d_n \leq Cn^{-\gamma_* -1+ \epsilon}$ for all $k \leq n < n_0$. By induction, using \eqref{eq:to-show-leq-1} in \eqref{eq:d_n-exact}, we deduce that 
    \begin{align*}
        d_n = \frac{1}{h_n} \sum_{i=k}^{n-1} \alpha_{i,n}' d_i
        &\leq \frac{1}{h_n} \sum_{i=k}^{n-1} \alpha_{i,n}' Ci^{-\gamma_*-1 + \epsilon} 
        \\
        &\leq Cn^{-\gamma_* -1 + \epsilon}
    \end{align*}
    for $n \in \N$. Since $\epsilon > 0$ was arbitrary, this implies $d_n \leq n^{-\gamma_* -1+ o(1)}$. 

    This gives only the upper bound $C(\epsilon) = O(1)$ for $k$ fixed. To obtain the upper bound on $C(\epsilon, k)$ in item \ref{thm:exponent-growing-k}, we truncate the sequence such that the induction base case is between $2k$ and $m = \min\{k\log k, \sqrt{nk}\}$. By Theorem~\ref{thm:exponent}\ref{thm:bootstrap} we know that for $C = O(x_k(\log k)^{\gamma_* + 1 -\epsilon}k^{0.01-\epsilon}) = O(x_kk^{0.01})$ and all $2k \leq n \leq m$,
    \[ d_n = O\bpar{\frac{x_k}{k^{0.99}}} \leq Ck^{\gamma_*}n^{-\gamma_*-1+\epsilon} .\]
    Thus, it suffices to show that the truncated contribution is negligible, namely
    \[ \sum_{i=k}^{2k} i^{-\gamma_*-1+\epsilon}\alpha_{i,n}'\frac{1}{h_n} = o(n^{-\gamma_*-1+\epsilon}). \]
    For $k \leq i \ll n$ we have 
    \begin{align*}
        \alpha_{i,n}' &\leq \frac{i-k+1}{(n-i)(n-k+1)} - \frac{1}{n}\left(\frac{1}{n-k} + \cdots + \frac{1}{n-i}\right) \\
        &= \sum_{\ell=k}^i \left(\frac{1}{(n-i)(n-k+1)} -\frac{1}{n(n-\ell)}\right) \\
        &= \sum_{\ell=k}^i \frac{n(i+k-\ell-1) + i - ik}{n(n-\ell)(n-i)(n-k+1)} \\
        &\leq \frac{(i+k-1)(i-k+1)}{(n-i)^3}.
    \end{align*}
    Using this bound, we deduce
    \begin{align*}
        \frac{1}{h_n}\sum_{i=k}^{2k} i^{-\gamma_*-1+\epsilon}\alpha_{i,n}' &\leq \frac{1}{h_n}\sum_{i=k}^{2k} \frac{i^2}{n^3}\cdot i^{-\gamma_*-1+\epsilon} = \frac{1}{n^3h_n}\sum_{i=k}^{2k} i^{1-\gamma_*+\epsilon} \leq \frac{(2k)^{2-\gamma_*+\epsilon}}{n^3h_n} \\ 
        &= o(n^{-\gamma_*-1+\epsilon}).
    \end{align*}

    Conversely, for the lower bound, set $n_1 = n_1(\epsilon)$ sufficiently large such that \eqref{eq:to-show-geq-1} gives
    \[ \frac{1}{h_n} \sum_{i=k}^{n-1} i^{-\gamma_* -1 - \epsilon} \alpha_{i,n}' \geq n^{-\gamma_* -1- \epsilon} 
    \]
    for all $n \geq n_1$. Choose $c = c(\epsilon)$ small enough so that $d_n \geq cn^{-\gamma_*-1 - \epsilon}$ for all $k \leq n < n_1$. By induction, using \eqref{eq:to-show-geq-1}, we deduce that $d_n \geq cn^{-\gamma_*-1 - \epsilon}$ for all $n$. This implies the lower bound $d_n \geq n^{-\gamma_*-1 - o(1)}$. 

    To conclude, note that
    \[ x_n - x = \sum_{i=n}^\infty d_i = \sum_{i=n}^\infty i^{-\gamma_*-1+o(1)} = n^{-\gamma_* + o(1)}. \]
    The conclusion for~\ref{thm:exponent-growing-k} follows similarly.
\end{proof}
\begin{lem}\label{lem: EM}
    For $2 < \gamma < 3$ and $k = o(n)$, we have that $j_n$ can be written as
    \[ h_n - \sum_{i=k}^{n-1} \left(\frac{i/n}{1-i/n} - \log\left(1 + \frac{i/n}{1-i/n}\right)\right)\left(\frac{n}{i}\right)^\gamma \cdot \frac{1}{n} = \sum_{i=1}^\infty \left(\frac{1}{i}-\frac{i}{(i+1)(i+2-\gamma)}\right) + o(1).  \]
\end{lem}
\begin{proof}
    We use the Euler--Maclaurin formula in the following form (see e.g.~\cite{whittaker2021}): for $a, b \in \N$ and $f: \R \to \R$ a continuous function, 
    \begin{equation}\label{eq: EM}
        \sum_{i=a}^b f(i) = \int_a^b f(x) \, dx + \frac{f(a)+f(b)}{2} + R_1,
    \end{equation}
    with 
    \[ \absd{R_1} \leq \int_a^b |f'(x)| \, dx. \]
    We apply this to the function 
    \[ f(i) = \frac{1}{n-i} - \left(\frac{i/n}{1-i/n} - \log\left(1 + \frac{i/n}{1-i/n}\right)\right)\left(\frac{n}{i}\right)^\gamma \cdot \frac{1}{n} . \]
    We also define 
    \[ g(x) = \frac{1}{1-x} - \frac{x^{1-\gamma}}{1-x} - x^{-\gamma}\log(1-x) , \]
    and note that $f(x) = g(x/n)/n$ and thus $f'(x) = g'(x/n)/n^2$. We compute each term in \eqref{eq: EM}. 
    \begin{align*}
        \int_k^{n-1} f(x) \, dx &= \int_{k/n}^{1-1/n} g(x) \, dx.
    \end{align*}
    Taylor expanding within the integral, we get 
    \begin{align*}
        \int_{k/n}^{1-1/n} g(x) \, dx &= \int_{k/n}^{1-1/n} \sum_{i=0}^\infty x^i - \sum_{i=1}^\infty x^{-\gamma + i} + \sum_{i=1}^\infty \frac{1}{i}x^{-\gamma + i} \, dx \\
        &= \int_{k/n}^{1-1/n} \sum_{i=0}^\infty x^i - \sum_{i=2}^\infty \frac{i-1}{i}x^{-\gamma + i} \, dx \\
        &= \left[\sum_{i=1}^\infty \frac{1}{i}x^i - \sum_{i=2}^\infty \frac{i-1}{i(i+1-\gamma)}x^{i+1-\gamma}\right]_{k/n}^{1-1/n} \\
        &= \sum_{i=1}^\infty \left(\frac{1}{i} - \frac{i}{(i+1)(i+2-\gamma)}\right)\left(1-\frac{1}{n}\right)^i + O((k/n)^{3-\gamma}).
    \end{align*}
    Notice that since the sum $\sum_{i=1}^\infty \left(\frac{1}{i} - \frac{i}{(i+1)(i+2-\gamma)}\right)$ is convergent, we have 
    \begin{align*}
        \sum_{i=1}^\infty \left(\frac{1}{i} - \frac{i}{(i+1)(i+2-\gamma)}\right)\left(1-\frac{1}{n}\right)^i &= (1+o(1))\sum_{i=1}^{\log n} \left(\frac{1}{i} - \frac{i}{(i+1)(i+2-\gamma)}\right) \\
        &\qquad + \sum_{i=\log n}^\infty \left(\frac{1}{i} - \frac{i}{(i+1)(i+2-\gamma)}\right)\left(1-\frac{1}{n}\right)^i \\
        &= \sum_{i=1}^\infty \left(\frac{1}{i} - \frac{i}{(i+1)(i+2-\gamma)}\right) + o(1).
    \end{align*}
    
    For the middle term of \eqref{eq: EM} we have 
    \begin{align*}
        \frac{f(k)+f(n-1)}{2} &= \frac{1}{2}\left( \frac{1}{n-k} - \frac{(n/k)^{\gamma-1}}{n-k} - \frac{n^{\gamma-1}}{k^\gamma}\log\left(1-\frac{k}{n}\right)\right) \\
        &\qquad + \frac{1}{2}\left(1-\left(\frac{n-1}{n}-\frac{\log n}{n}\right)\left(\frac{n}{n-1}\right)^\gamma \right) \\
        &=\left( -\frac{1}{4} + o(1)\right)k^{2-\gamma}n^{\gamma-3} = o(1).
    \end{align*}
    The final equality holds from Taylor expanding $\log(1-k/n)$. 
    
    Finally, to handle the error term in \eqref{eq: EM} we compute 
    \[ g'(x) = \frac{-(\gamma+1)x^{1-\gamma}+1+\gamma x^{-\gamma}}{(1-x)^2} + \gamma x^{-\gamma-1}\log(1-x). \]
    We claim (see appendix~\ref{appendix}) that $g'(x) > 0$ on the interval $(0,1)$. Thus, 
    \[ |R_1| \leq \int_k^{n-1} |f'(x)| \, dx = \int_k^{n-1} f'(x) \, dx = f(n-1) - f(k) = \left(\frac{1}{4} + o(1)\right)k^{2-\gamma}n^{\gamma-3}. \]
    computed similarly as above.

    To conclude, note that for $k=o(n)$, we have
    \[ \sum_{i=k}^{n-1} f(i) = h_{n-k} - \sum_{i=k}^{n-1} \left(\frac{i/n}{1-i/n} - \log\left(1 + \frac{i/n}{1-i/n}\right)\right)\left(\frac{n}{i}\right)^\gamma \cdot \frac{1}{n} \]
    and $h_{n-k} = h_n + o(1)$. 
\end{proof}

\section{Contraction in the critical beta-splitting tree}\label{sec: contraction}
In this section we work within the framework of~\cite{NR:04} for degenerate distributional recursions and prove Proposition~\ref{prop: contraction}. We will bound the distance from the Gaussian distribution in the Zolotarev metric $\zeta_3$ defined as 
\[ \zeta_3(X,Y) = \sup_{f \in \mathcal{F}_3} \absd{\E{f(X) - f(Y)}}
\]
where $\mathcal{F}_3 = \{f \in C^2(\R, \R): |f''(x) - f''(y)| \leq |x-y|\}$ is the space of twice differentiable functions with 1-Lipschitz second derivative. For background on this metric, we refer to~\cite{zolotarev1977approximation, zolotarev1978ideal, rachev1991probability}. We use the following nice properties of this metric:
\begin{enumerate}
    \item convergence in $\zeta_3$ implies weak convergence,
    \item $\zeta_3(X+Z, Y+Z) \leq \zeta_3(X, Y)$,
    \item $\zeta_3(cX, cY) = |c|^3 \zeta_3(X, Y)$. 
\end{enumerate}

We begin with a series of definitions, chosen to align with~\cite{NR:04} as best possible. For every $n \in \N$, we abbreviate $\mu_n \ceq \E{X_n}$ and $\tau_n \ceq \sqrt{\va[Z_n]} = 1 + O(n^{-\epsilon})$, and define the normalized random variable $Z_n = \frac{X_n - \E{X_n}}{\sigma \sqrt{n}}$. Our distributional recurrence is equivalent to 
\[ Z_n \disteq \sqrt{\frac{I_n}{n}} Z_{I_n} + \sqrt{\frac{n-I_n}{n}} Z_{n-I_n} + b^{(n)} \]
where $b^{(n)} = \frac{1}{\sigma\sqrt{n}}(b_n + \mu_{I_n} + \mu_{n-I_n} - \mu_n)$. Let $N_1$ and $N_2$ be independent (from everything) standard Gaussians and define
\[ Z_n^* \ceq \sqrt{\frac{I_n}{n}} \tau_{I_n}N_1 + \sqrt{\frac{n-I_n}{n}} \tau_{n-I_n} N_2 + b^{(n)}. \]
$Z_n^*$ will serve as an intermediary between $Z_n$ and the standard Gaussian.

The proof of Proposition~\ref{prop: contraction} is adapted from the strategy of \cite[Theorem 2.1]{NR:04}. We present it here in a self-contained manner, while isolating the parts of the proof specific to our setting and postponing the remainder to the appendix. The strategy consists of two main steps, captured by the following two lemmas. 

The first provides a recursive upper bound on the distance of our recursively defined variable $Z_n$ from a Gaussian. This is analogous to~\cite[(18)]{NR:04} and its proof is in appendix~\ref{app: lem1}.
\begin{lem}\label{lem: app1}
    Let $d_n = \zeta_3(Z_n, \tau_n N_1)$. Then 
    \begin{equation}\label{eq:d_n-recursion}
    d_n \leq \E{\left(\frac{n-I_n}{n}\right)^{3/2}d_{n-I_n} + \left(\frac{I_n}{n}\right)^{3/2}d_{I_n}} + \zeta_3(Z_n^*, \tau_n N_1).         
    \end{equation} 
\end{lem}

The second bounds the extra additive term in the above recursion \eqref{eq:d_n-recursion}. We first require a few more definitions. Let $G_n \ceq \sqrt{\frac{I_n}{n}\tau_{I_n}^2 + \frac{n-I_n}{n}\tau_{n-I_n}^2}$ and $\Delta_n \ceq \sqrt{|G_n^2 - \tau_n^2|}$. The following is analogous to~\cite[(23)]{NR:04} and its proof is in appendix~\ref{app: lem2}.
\begin{lem}\label{lem: app2}
    There exists a constant $C > 0$ such that
    \[ \zeta_3(Z_n^*, \tau_n N_1) \leq C(|\tau_n-1|^3 + \|\Delta_n\|_3^3  + \|b^{(n)}\|_3^3 + \|G_n-1\|_3^3 + \|b^{(n)}\|_2(|\tau_n-1| + \|G_n-1\|_2)). \]
\end{lem}

\begin{proof}[Proof of Proposition~\ref{prop: contraction}]
    We begin by bounding each of these terms in Lemma~\ref{lem: app2}. Throughout this proof, $C$ is a universal constant that is allowed to change from line to line.
    \begin{itemize}
        \item By assumption, 
        \[ |\tau_n-1|^3 = Cn^{-3\epsilon}. \] 
        \item Similarly, by assumption on moments of $b_n$ we have
            \begin{align*}
                \|b^{(n)}\|_3^3 &= \frac{1}{\sigma^3n^{3/2}}\E{|b_n + \mu_{I_n} + \mu_{n-I_n} - \mu_n|^3} \\
                &\leq \frac{1}{\sigma^3n^{3/2}}\E{|b_n + Cn^{1/2-\epsilon}|^3} \\
                &\leq Cn^{-3\epsilon}.
            \end{align*} 
        \item For the $G_n$ term, 
        \begin{align*}
            \|G_n - 1\|_3^3 
            &= \E{\left| \sqrt{\frac{I_n}{n} \frac{\va{X_{I_n}}}{\sigma^2 n} + \frac{n-I_n}{n} \frac{\va{X_{n-I_n}}}{\sigma^2 (n-I_n)} } - 1  \right|^3} \\
            &= n^{-3/2} \E{\left| \sigma^{-1} \sqrt{\va {X_{I_n}} + \va{X_{n-I_n}}} - \sqrt{n}\right|^3 } \\
            &\leq n^{-3/2} \sum_{i=\lceil n/2 \rceil}^{n-1} \frac{n}{i(n-i)h_{n-1}} \left|\sqrt{i + n-i} + Cn^{1/2-\epsilon} - \sqrt{n} \right|^3 \\
            &\leq Cn^{-3\epsilon}.
        \end{align*}
        \item Finally,
        \begin{align*}
            \|\Delta_n\|_3 &= \|\sqrt{\absd{\tau_n^2 - G_n^2}}\|_3 \leq \sqrt{\|\tau_n^2 - G_n^2\|_3} \leq \sqrt{|\tau_n^2-1| + \|G_n^2-1\|_3}.
        \end{align*}
        Notice that both $\tau_n$ and $G_n$ are bounded, so this is bounded by $C\sqrt{|\tau-1| + \|G_n-1\|_3}$ which we have already estimated. 
    \end{itemize}
    All together, this shows that there exists a $C > 0$ such that
    \[ \zeta_3(Z_n^*, \tau_n N_1) \leq Cn^{-\epsilon}. \]
    Combined with the above estimates, Lemmas~\ref{lem: app1} and~\ref{lem: app2} imply that 
    \begin{align*}
        d_n &\leq \E{\left(\frac{n-I_n}{n}\right)^{3/2}d_{n-I_n} + \left(\frac{I_n}{n}\right)^{3/2}d_{I_n}} + Cn^{-\epsilon}.
    \end{align*}
    Fix $n_0$ sufficiently large and let $C_0 \geq \max\{d_nn^{\epsilon/100}: n \leq n_0\}$, both to be specified later. By construction, $d_n \leq C_0n^{-\epsilon/100}$ for all $n \leq n_0$. We now inductively show that $d_n \leq C_0 n^{-\epsilon/100}$ for all $n > n_0$. We have
    \begin{align*}
        d_n &\leq \sum_{i=\lceil n/2 \rceil}^{n-1} \frac{n}{i(n-i)h_{n-1}} \left( \left(\frac{n-i}{n}\right)^{3/2}d_{n-i} + \left(\frac{i}{n}\right)^{3/2}d_i\right) + Cn^{-\epsilon} \\
        &\leq \sum_{i=1}^{n-1} \frac{n}{i(n-i)h_{n-1}}\left(\frac{i}{n}\right)^{3/2}\frac{C_0}{i^{\epsilon/100}} + Cn^{-\epsilon} \\
        &\leq \frac{C_0}{n^{\epsilon/100}} \sum_{i=1}^{n-1} \frac{i^{1/2}}{n^{1/2}(n-i)h_{n-1}} \frac{n^{\epsilon/100}}{i^{\epsilon/100}} + Cn^{-\epsilon} \\
        &\leq \frac{C_0}{n^{\epsilon/100}} \sum_{i=1}^{n-1} \frac{i^{1/2-\epsilon/100}}{n^{1/2-\epsilon/100}(n-i)h_{n-1}} + Cn^{-\epsilon} \\
        &\leq \frac{C_0}{n^{\epsilon/100}} \cdot \frac{1}{h_{n-1}} \int_0^{1-1/2n} \frac{x^{1/2-\epsilon/100}}{1-x} \, dx + Cn^{-\epsilon}. 
    \end{align*}
    For each fixed $n$ sufficiently large,
    \begin{align*}
        h_{n-1} - \int_0^{1-1/2n} \frac{x^{1/2-\epsilon/100}}{1-x} \, dx &= \sum_{i=1}^{n-1} \frac{1}{i} - \sum_{i=1}^\infty \frac{100}{100i+50-\epsilon}\left(1-\frac{1}{2n}\right)^i \\
        &\geq \sum_{i=1}^{n-1} \left(\frac{1}{i} - \frac{100}{100i+50-\epsilon}\right)\left(1-\frac{1}{2n}\right)^i - \sum_{i=n}^\infty \frac{100}{100i+50-\epsilon}\left(1-\frac{1}{2n}\right)^i \\
        &\geq (1-o(1))\sum_{i=1}^{\log n} \frac{50-\epsilon}{i(100i+50-\epsilon)} - \sum_{j=0}^\infty e^{-1.1^j/2}\sum_{i=1.1^jn}^{1.1^{j+1}n} \frac{1}{i}  \\
        &\geq \left(1-\frac{\epsilon}{50}\right)(2-\log(4)) - \log(1.1)\sum_{j=0}^\infty e^{-1.1^j/2} > 0.01
    \end{align*}
    as $\epsilon < \frac{1}{2}$. In the final inequality, the infinite sum, which converges rapidly, was computed numerically. In particular, there exists an $n_0$ and $c>0$ such that for $n > n_0$, we have  
    \[ \frac{1}{h_{n-1}} \int_0^{1-1/2n} \frac{x^{1/2-\epsilon/100}}{1-x} \, dx \leq 1 - \frac{c}{\log n}. \]

    Now choose $C_0 = \max\{ \max\{d_nn^{\epsilon/100}: n \leq n_0\}, \max\{\frac{C\log n}{cn^{99\epsilon/100}}: n > n_0\}\}$. Then
    \[
    d_n \leq \frac{C_0}{n^{\epsilon/100}} \bpar{1 - \frac{c}{\log n}} + Cn^{-\epsilon} = \frac{C_0}{n^{\epsilon/100}} \bpar{1 - \frac{c}{\log n} + \frac{C}{C_0n^{99\epsilon/100}}}  \leq \frac{C_0}{n^{\epsilon/100}}.
    \] 
    This proves that 
    $d_n \leq {C_0}{n^{-\epsilon/100}}$
    for all $n \in \N$ as desired, and is sufficient to conclude, as 
    \begin{equation*}
    \zeta_3\left(\frac{X_n-\E{X_n}}{\sqrt{\va{X_n}}}, N_1\right) = \tau_n^{-3}\zeta_3(Z_n, \tau_n N_1) = (1+o(1)) d_n = o(1).  \qedhere
    \end{equation*}
\end{proof}
\begin{rem}
    The same proof holds as long as $\E{X_n}, \va[X_n] = \Theta(n^\alpha)$ for some $\alpha > 0$.
\end{rem}

\section{CLT for the number of copies of a clade}\label{sec: clt1}
Fix $k \in \N$. For our setting of the count of number of fringe subtrees, we can write, for $n \geq 2$,
\begin{equation}\label{eq:N_n-k-distributional}
    N_n(k) \disteq N_{L_n}(k) + N_{n - L_n}(k),
\end{equation}
where $N_{L_n}(k)$ and $N_{n - L_n}(k)$ are independent. This falls into the  setting of Proposition~\ref{prop: contraction}, and thus it suffices to check $\E{N_n(k)}$ and $\va[N_n(k)]$ to deduce our central limit theorem. 

We use the recursive estimates to analyze the mean and variance of $N_n(k)$, the number of fringe subtrees of size $k$ of the critical beta-splitting tree DTCS($n$). 
Denote $E_n = \frac{1}{n}\E{N_n(k)}$ and $V_n = \frac{1}{n}\va[N_n(k)]$. 
\begin{lem}\label{lem: expectation}
    There exists a constant $\mu \ceq \mu(k)$ such that $E_n = \mu + O\left({1}/{n}\right)$. 
\end{lem}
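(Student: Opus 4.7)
The plan is to observe that $E_n$ is an instance of the recursion \eqref{eq:recursion} and then directly invoke Theorem~\ref{thm:exponent}\ref{thm:x_n-simple-rate}.

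First, I would note that the computation already carried out in \eqref{eq:total-expectation} gives
\[
E_n = \frac{\E{N_n(k)}}{n} = \sum_{i=1}^{n-1} \frac{1}{h_{n-1}(n-i)} \cdot \frac{\E{N_i(k)}}{i} = \sum_{i=1}^{n-1} \frac{1}{h_{n-1}(n-i)} E_i
\]
for every $n > k$. Combined with the initial conditions $E_n = 0$ for $n < k$ and $E_k = 1/k$, this is exactly the recursion \eqref{eq:recursion} with $x_n = E_n$ and positive seed $x_k = 1/k$.

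Consequently, Theorem~\ref{thm:exponent}\ref{thm:x_n-decreasing} gives that $\mu \ceq \lim_{n\to\infty} E_n$ exists (and defines the constant $\mu(k)$ appearing in the lemma statement). Theorem~\ref{thm:exponent}\ref{thm:x_n-simple-rate} then gives, for all $n \geq k$,
\[
|E_n - \mu| = O\bpar{\tfrac{k \cdot x_k}{n-k}} = O\bpar{\tfrac{1}{n-k}},
\]
where the implied constant is absolute. For $n \geq 2k$ this is $O(1/n)$, and for $k \leq n < 2k$ both $E_n$ and $\mu$ are bounded in terms of $k$, so $|E_n - \mu| = O(1) = O(1/n)$ with a constant depending only on $k$. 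Since the lemma allows $k$-dependent constants, combining the two ranges completes the proof. I do not anticipate any genuine obstacle: the content of the lemma is precisely the specialization of the general long-term error estimate \ref{thm:x_n-simple-rate} to the initial condition $x_k = 1/k$.
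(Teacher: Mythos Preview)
Your proposal is correct and matches the paper's own proof essentially line for line: identify $E_n$ as an instance of the recursion \eqref{eq:recursion} via \eqref{eq:total-expectation} with initial data $E_1=\cdots=E_{k-1}=0$, $E_k=1/k$, and then invoke Theorem~\ref{thm:exponent}\ref{thm:x_n-simple-rate} to get $E_n-\mu=O(1/(n-k))$. Your extra paragraph converting $O(1/(n-k))$ to $O(1/n)$ is fine but unnecessary, since $k$ is fixed throughout this section.
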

\begin{proof} 
    Recall from \eqref{eq:total-expectation} that the law of total expectation yields
    \[ E_n = \sum_{i=1}^{n-1} \frac{1}{h_{n-1}(n-i)}\cdot E_i.\]
    It is easy to see that 
    $E_1 = \cdots E_{k-1} = 0$ and $E_k = 1/k$.
    From  Theorem~\ref{thm:exponent}\ref{thm:x_n-simple-rate}, there exists $\mu > 0$ such that $E_n - \mu = O(1/(n-k))$. 
\end{proof}

\begin{lem}\label{lem: variance}
    There exists a constant $\sigma \ceq \sigma(k)$ such that $V_n = \sigma^2 + O(n^{-1/6})$.
\end{lem}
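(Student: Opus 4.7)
The plan is to derive a recursion for $V_n$ of the form in~\eqref{eq:recursion} with an additional nonnegative forcing term, express $V_n$ explicitly as a weighted sum of these forcings through the occupation probabilities of the harmonic descent chain, and then establish convergence by splitting this sum into three ranges matched to the three parts of Theorem~\ref{thm:exponent}.

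First, conditioning on $L_n$ and applying the law of total variance (using independence of the two subtrees given $L_n$) yields
\[
V_n = \sum_{l=1}^{n-1} p(n,l)\, V_l + R_n,
\]
with $p(n,l) = 1/(h_{n-1}(n-l))$ exactly as in~\eqref{eq:recursion} and forcing
\[
R_n = \frac{1}{n}\va\bigl(L_n E_{L_n} + (n - L_n) E_{n - L_n}\bigr) \geq 0.
\]
Writing $\epsilon_j \ceq E_j - \mu$, the centered random variable inside the variance equals $L_n\epsilon_{L_n} + (n-L_n)\epsilon_{n-L_n} - n\epsilon_n$. By Lemma~\ref{lem: expectation}, $|j\epsilon_j| = O(1)$ for $j \geq k$; handling the $j < k$ boundary terms (where $E_j = 0$) separately then yields $R_n = O(1/n)$.

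Since $V_l = 0$ for $l \leq k$, iterating the recursion gives the explicit formula
\[
V_n = \sum_{m = k+1}^n a(n,m)\, R_m,
\]
where $a(n,m)$ is the probability that the harmonic descent chain started at $n$ visits state $m$. For each fixed $m > k$, the sequence $(a(n,m))_{n \geq m}$ satisfies~\eqref{eq:recursion} with $x_m = 1$, so by Theorem~\ref{thm:exponent}\ref{thm:x_n-decreasing} it is decreasing in $n$ and converges to $a(m) = 6h_{m-1}/(\pi^2(m-1))$. Since $a(m) R_m = O(\log m / m^2)$ is summable, $\sigma^2 \ceq \sum_{m > k} a(m) R_m$ is a finite nonnegative constant, and the bound $a(n,m) \geq a(m)$ immediately yields $\liminf_{n \to \infty} V_n \geq \sigma^2$.

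For the matching upper bound $\limsup_{n \to \infty} V_n \leq \sigma^2$, we show $\sum_{m \leq n}(a(n,m) - a(m))\, R_m \to 0$ (the tail $\sum_{m > n} a(m) R_m$ is already $o(1)$) by splitting into three ranges. In the far range $m \leq n/\log n$, Theorem~\ref{thm:exponent}\ref{thm:x_n-simple-rate} gives $a(n,m) - a(m) = O(m/(n-m)) = O(m/n)$, contributing $O(1/\log n)$. In the near range $m > n - n^{0.99}$, the bootstrap Theorem~\ref{thm:exponent}\ref{thm:bootstrap} (with $c = 1000$, $\ell = 9$) gives $a(n,m) \leq C/((n-m)\, h_m)$, contributing $O(1/n)$. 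The main obstacle is the middle range $m \in (n/\log n,\, n - n^{0.99}]$, where neither estimate is sharp; here we use the monotonicity from Theorem~\ref{thm:exponent}\ref{thm:x_n-decreasing} together with the identity $a(m+1,m) = p(m+1,m) = 1/h_m$ to conclude $a(n,m) \leq 1/h_m$ for all $n \geq m+1$, bounding the contribution by $\sum 1/(m\, h_m) = O(\log\log n / \log n)$ through a $\log\log$ integral estimate. Summing the three ranges gives $V_n = \sigma^2 + o(1)$, and this is precisely why all of Theorem~\ref{thm:exponent}\ref{thm:x_n-decreasing}--\ref{thm:bootstrap} come into play.
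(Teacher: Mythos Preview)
Your proof is correct and follows essentially the same superposition strategy as the paper: both derive the forced recursion $V_n=\sum_l p(n,l)V_l+\epsilon_n$ with $\epsilon_n=O(1/n)$, then express $V_n$ as a sum over $m$ of the homogeneous solutions seeded by $\epsilon_m$ (your $a(n,m)R_m$ is exactly the paper's $V_n^{(m)}$), and finally split the index $m$ into a far, middle, and near range.

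The one genuine difference lies in the middle range. The paper applies the bootstrap Theorem~\ref{thm:exponent}\ref{thm:bootstrap} with $c=2,\ell=1$ (combined implicitly with monotonicity) to obtain the summable bound $V_n^{(m)}=O(m^{-7/6})$ for $\sqrt n\le m\le n-n^{1/6}$, which yields the quantitative rate $V_n=\sigma^2+O(n^{-1/6})$. You instead use monotonicity alone via the clean identity $a(n,m)\le a(m+1,m)=1/h_m$, giving the non-summable pointwise bound $O(1/(m\log m))$; this forces your far-range cutoff up to $n/\log n$ and yields only $O(\log\log n/\log n)$ for the middle contribution. Both are $o(1)$, so both suffice for the lemma as stated, but the paper's route gives a polynomial rate at the cost of invoking~\ref{thm:bootstrap} twice, while yours is slightly more elementary in the middle at the cost of a weaker final error.
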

\begin{proof}
    By the law of total variance, 
    \begin{align*}
        V_n = \frac{1}{n}\va[N_n(k)] &= \frac{1}{n}\E{\va[N_{L_n}(k) + N_{n-L_n}(k)]} + \frac{1}{n}\va[\E{N_{L_n}(k) + N_{n-L_n}(k)}].
    \end{align*}
    By the same calculation as in the proof of Lemma~\ref{lem: expectation},
    \[ \frac{1}{n}\E{\va(N_{L_n}(k) + N_{n-L_n}(k))} = \sum_{i=1}^{n-1} \frac{1}{h_{n-1}(n-i)}\cdot V_i. \]
    On the other hand, by Lemma~\ref{lem: expectation}, we know that 
    \[ \epsilon_n \vcentcolon= \frac{1}{n}\va(\E{N_{L_n}(k) + N_{n-L_n}(k)}) = O\left(\frac{1}{n}\right). \]
    Thus, we can write 
    \[ V_n = \sum_{i=1}^{n-1} \frac{1}{h_{n-1}(n-i)} \cdot V_i + \epsilon_n. \]
    Once again it is easy to check that we have $V_1 = \cdots = V_{2k-1} = 0$ and $V_{2k} > 0$. 
    Define a collection of  sequences $(V_n^{(m)})_{0 \leq m \leq n}$ with the following initial conditions:
    \begin{align*}
        &V_1^{(0)} = \cdots = V_{2k-1}^{(0)} = 0 \text{ and } V_{2k}^{(0)} = V_{2k}, \\
        &V_1^{(m)} = \cdots = V_{m-1}^{(m)} = 0 \text{ and } V_m^{(m)} = \epsilon_m \text{ for $m \geq 1$}.
    \end{align*}
    We claim that 
    \[ V_n = \sum_{m=0}^n V_n^{(m)}. \]
    By Theorem~\ref{thm:exponent}\ref{thm:x_n-decreasing}--\ref{thm:x_n-simple-rate}, there exist constants $\sigma_0 \ceq \sigma_0(k)$, $\sigma_1 \ceq \sigma_1(k), \ldots$ such that
    \begin{align*}
        V_n^{(m)} = \sigma_m^2 + O\left(\frac{1}{n-m}\right).
    \end{align*}
    We use this for $m < \sqrt{n}$. Notice error term in this bound is very crude for $m$ large. Indeed, by Theorem~\ref{thm:exponent}\ref{thm:bootstrap} (with $c = 2$ and $\ell=1$) we know that $0 \leq \sigma_m^2 \leq V_n^{(m)} \leq \frac{1}{m^{7/6}}$ when $n-m > m^{1/6}$. Thus for $\sqrt{n} \leq m < n - n^{1/6}$ we have 
    \[ V_n^{(m)} = \sigma_m^2 + O\left(\frac{1}{m^{7/6}}\right) \] 
    Finally, for $m$ very close to $n$, i.e. $n - n^{1/6} \leq m \leq n$ we have the naive bound
    \[ V_n^{(m)} = \sigma_m^2 + O\left(\frac{1}{m}\right) \]  
    following from $0 \leq \sigma_m^2 \leq V_n^{(m)} \leq \frac{1}{m}$. All together this gives 
    \[ V_n = \sum_{m=0}^n \sigma_m^2 + o(1). \]
    Moreover, by Lemma~\ref{lem: expectation}, we know $\epsilon_m = O\left({1}/{m}\right)$ 
    so Theorem~\ref{thm:exponent}\ref{thm:bootstrap} tells us that $\sigma_m^2 = O(m^{-7/6})$, so that 
    \[ \sum_{m=0}^\infty \sigma_m^2 < \infty \text{ and } \sum_{m=n}^\infty \sigma_m^2 = O(n^{-1/6}).  \] 
    Thus, taking $\sigma^2 = \sum_{m=0}^\infty \sigma_m^2$ yields 
    \[ V_n = \sigma^2 + O(n^{-1/6}). \qedhere \]
\end{proof}

Note that, fixing a clade $\chi$, the same distributional recursion \eqref{eq:N_n-k-distributional} holds for $N_n(\chi)$, the number of times $\chi$ appears as a fringe tree in DTCS$(n)$. Lemmas~\ref{lem: expectation} and \ref{lem: variance} also both still hold, with different constants $\mu \ceq \mu(\chi)$ and $\sigma \ceq \sigma(\chi)$. So the following proof is identical for $N_n(\chi)$.

\begin{proof}[Proof of Corollary~\ref{cor:CLT-fringe}]
    Follows from Proposition~\ref{prop: contraction} and Lemmas~\ref{lem: expectation} and~\ref{lem: variance}.
\end{proof}

\section{CLT for the total length of the CTCS}\label{sec: clt2}
In this section we prove our central limit theorem for $\Lambda_n$. Recall that $\Lambda_n$ is the total length of CTCS$(n)$. In order to apply Proposition~\ref{prop: contraction} we rely on the distributional recurrence
\begin{equation}
\Lambda_n \disteq \Lambda_{L_n} + \Lambda_{n-L_n} + \mathrm{Exp}(h_{n-1}). 
\end{equation} 
Moreover, since each time a clade of size $i$ splits, the length increases by an independent Exp$(h_{i-1})$ amount of time, the total length can be computed as 
\[ \Lambda_n \disteq \sum_{i=2}^n \sum_{j=1}^{N_n(i)} \mathrm{Exp}(h_{i-1}). \]
We use these distributional equalities to compute the mean and variance of $\Lambda_n$. 

\begin{lem}\label{lem: length expectation}
    There exists a constant $\mu_\Lambda$ such that $\E{\Lambda_n} = \mu_\Lambda n + O\left(n^{0.41}\right)$. 
\end{lem}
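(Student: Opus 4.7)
The starting point is the alternative distributional representation $\Lambda_n \disteq \sum_{i=2}^n \sum_{j=1}^{N_n(i)} \mathrm{Exp}(h_{i-1})$, which on taking expectations gives
\[
\E{\Lambda_n} = \sum_{i=2}^n \E{N_n(i)}/h_{i-1} = n \sum_{i=2}^n \frac{a(n,i)}{i\,h_{i-1}}
\]
via the identity $\E{N_n(i)} = na(n,i)/i$. Using the explicit formula $a(i) = 6h_{i-1}/(\pi^2(i-1))$ stated in the introduction, the candidate constant is $\mu_\Lambda := \sum_{i=2}^\infty a(i)/(ih_{i-1}) = \sum_{i=2}^\infty 6/(\pi^2 i(i-1)) = 6/\pi^2$. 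Decomposing $\E{\Lambda_n} - \mu_\Lambda n = n\sum_{i=2}^n (a(n,i)-a(i))/(ih_{i-1}) - n\sum_{i>n} a(i)/(ih_{i-1})$, the tail term is $O(1)$ since it telescopes to $O(1/n)$, so the task reduces to bounding $S := \sum_{i=2}^n (a(n,i) - a(i))/(i h_{i-1})$ by $O(n^{-0.59})$.

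To bound $S$, I rewrite $a(n,i) - a(i) = i(x_n(i) - x(i))$ in the paper's normalization $x_i = 1/i$ (so each term is non-negative by Theorem~\ref{thm:exponent}\ref{thm:x_n-decreasing}), and split the $i$-range at $T_1 = n^{0.62}$ and $T_2 = n - n^{0.99}$, applying a different part of Theorem~\ref{thm:exponent} in each regime. For $i \leq T_1$, Theorem~\ref{thm:exponent}\ref{thm:exponent-growing-k} applies with $C(i,\epsilon) = O_\epsilon(x_i i^{0.01}) = O_\epsilon(i^{-0.99})$, yielding $a(n,i) - a(i) = O_\epsilon(i^{\gamma_*+0.01-\epsilon}/n^{\gamma_*-\epsilon})$; the corresponding sum is bounded by $O_\epsilon(n^{-0.59 - \delta})$ for some $\delta > 0$, the cutoff $0.62$ being chosen so that the exponent $(0.62-1)\gamma_* + 0.62\cdot 0.01$ lands slightly below $-0.59$. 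For $i > T_2$, Theorem~\ref{thm:exponent}\ref{thm:bootstrap} applied with $c = 1000$, $\ell = 9$ gives $x_n(i) = O(x_i/((n-i)h_i))$, hence $a(n,i) = O(1/((n-i)h_i))$, contributing only $O(1/(n\log n))$.

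For the intermediate range $T_1 < i \leq T_2$, I would use Theorem~\ref{thm:exponent}\ref{thm:x_n-simple-rate} in the form $a(n,i) - a(i) = O(i/(n-i))$, combined with the trivial bound $|a(n,i) - a(i)| \leq 1$, taking the termwise minimum of the two estimates. Summing the three regime contributions then gives $S = O(n^{-0.59})$, which multiplied by $n$ yields the stated bound $|\E{\Lambda_n} - \mu_\Lambda n| = O(n^{0.41})$.

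The main obstacle will be the intermediate regime $T_1 < i \leq T_2$: the Theorem~\ref{thm:exponent}\ref{thm:x_n-simple-rate} estimate alone produces a contribution of only $O(1/\log n)$ rather than the required $O(n^{-0.59})$, so a careful interpolation between the available bounds --- or a sharper argument exploiting that $a(n,i)$ is actually much closer to $a(i)$ than the simple rate suggests for $i$ in this range --- is needed to close the gap and obtain the claimed exponent $0.41$.
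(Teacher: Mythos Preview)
Your overall skeleton --- the distributional identity, the identification $\mu_\Lambda = 6/\pi^2$, the three-regime split, and the treatment of the small-$i$ range via Theorem~\ref{thm:exponent}\ref{thm:exponent-growing-k} and the large-$i$ range via Theorem~\ref{thm:exponent}\ref{thm:bootstrap} --- is exactly what the paper does (with cutoff $n^{0.6}$ instead of your $n^{0.62}$). The gap you flag in the intermediate regime is real, and your candidate tools do not close it: both the bound $a(n,i)-a(i)=O(i/(n-i))$ from~\ref{thm:x_n-simple-rate} and the trivial bound $\leq 1$ give, after dividing by $ih_{i-1}$ and summing over $T_1<i\leq T_2$, a contribution of constant order rather than $O(n^{-0.59})$.

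The missing idea is that Theorem~\ref{thm:exponent}\ref{thm:bootstrap} (in the $c=1000$, $\ell=9$ instantiation of Remark~\ref{remark}) applies in the \emph{middle} regime as well, not only at the top. For $i$ with $T_1<i\leq T_2$ one has $n-i>n^{0.99}\geq i^{0.99}$, so by monotonicity (Theorem~\ref{thm:exponent}\ref{thm:x_n-decreasing}) and then~\ref{thm:bootstrap} with $m=i^{0.99}$,
\[
x_n(i)-x(i)\ \leq\ x_{i+\lfloor i^{0.99}\rfloor}(i)\ =\ O\!\left(\frac{x_i}{i^{0.99}h_i}\right)\ =\ O\!\left(\frac{1}{i^{1.99}h_i}\right).
\]
Hence $(a(n,i)-a(i))/(ih_{i-1})=O(i^{-1.99}/h_{i-1})$, and summing over $i>T_1$ gives $O(T_1^{-0.99})=O(n^{-0.594})$; multiplied by $n$ this is the $O(n^{0.41})$ term that actually dominates. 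In short, the paper uses the short-term estimate~\ref{thm:bootstrap} in \emph{two} different ways --- once at $m\approx i^{0.99}$ combined with monotonicity to handle the middle range, and once at $m=n-i$ directly to handle the top --- while the long-term estimate~\ref{thm:x_n-simple-rate} is not used at all here.
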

\begin{proof}
    The expected value of $\Lambda_n$ can be computed as 
    \[ \E{\Lambda_n} = n\sum_{k=2}^n \frac{\E{N_n(k)}}{n} \cdot \frac{1}{h_{k-1}} \]
    as each clade with $k$ leaves contributes an independent $\mathrm{Exp}(h_{k-1})$ amount of length. We know that $\E{N_n(k)} = \mu_k n + O(1)$ by Lemma~\ref{lem: expectation} but we will require more precise estimates on the error term $O(1)$ for various $k$. 
    
    For $k < n^{0.6}$ we use the bound $(\frac{k}{n})^{\gamma_*}k^{-0.99}$ from Theorem~\ref{thm:exponent}\ref{thm:exponent-growing-k}, for $n^{0.6} < k < n - n^{0.99}$ we use the bound ${k^{-1.99}}$ from Theorem~\ref{thm:exponent}\ref{thm:bootstrap} and for $k > n-n^{0.99}$ we use the bound $({k(n-k)})^{-1}$ also from Theorem~\ref{thm:exponent}\ref{thm:bootstrap}; see Remark~\ref{remark}. 
    Recalling the exact form of the limit $\E{N_k(k)}/n = a(n,k)/k \to {6h_{k-1}}/({\pi^2 (k-1)})$ from \eqref{eq:a_k}, we have the estimate
    \begin{align*}
        \E{\Lambda_n} &= n \sum_{k=2}^{n} \frac{a(k)}{k} \frac{1}{h_{k-1}} + n \sum_{k=2}^n \frac{a(n,k) - a(k)}{k} \frac{1}{h_{k-1}} \\
        &= n \sum_{k=2}^n \frac{6}{\pi^2 k (k-1)} + n\left(\sum_{k=2}^{n^{0.6}}  \left(\frac{k}{n}\right)^{\gamma_* +o(1)}k^{-0.99}  \frac{1}{h_{k-1}}  + \sum_{k=n^{0.6}}^{n-n^{0.99}} k^{-1.99}  \frac{1}{h_{k-1}}  \right. \\
        & \left. \hspace{4.5cm} +  \sum_{k=n-n^{0.99}}^n (k(n-k))^{-1} \frac{1}{h_{k-1}}  \right) \\
        &\leq \frac{6}{\pi^2}n + n^{1-\gamma_*+0.95} + n(n^{0.6})^{-0.99} + h_n \\
        &= \frac{6}{\pi^2}n + O(n^{0.41}). \qedhere
    \end{align*}
\end{proof}

\begin{lem}\label{lem: length variance}
    There exists a constant $\sigma_\Lambda$ such that $\va[\Lambda_n] = \sigma_\Lambda^2n + O(n^{0.9})$.
\end{lem}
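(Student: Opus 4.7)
The plan is to follow the strategy of Lemma~\ref{lem: variance}: set $U_n \ceq \va[\Lambda_n]/n$, derive a perturbed version of the harmonic-descent recursion for $U_n$, then decompose $U_n$ as a sum of solutions to the homogeneous recursion with atomic initial data. Applying the law of total variance to $\Lambda_n \disteq \Lambda_{L_n} + \Lambda_{n-L_n} + \mathrm{Exp}(h_{n-1})$ and symmetrizing as in the proof of Lemma~\ref{lem: expectation} gives
\[
U_n = \sum_{i=1}^{n-1} \frac{1}{h_{n-1}(n-i)} U_i + \tilde\epsilon_n, \qquad \tilde\epsilon_n \ceq \frac{1}{n} \va\left[\E{\Lambda_{L_n}} + \E{\Lambda_{n-L_n}}\right] + \frac{1}{n h_{n-1}^2}.
\]
By Lemma~\ref{lem: length expectation}, $\E{\Lambda_i} - \mu_\Lambda i = O(i^{0.41})$, so $\E{\Lambda_{L_n}} + \E{\Lambda_{n-L_n}} = \mu_\Lambda n + O(n^{0.41})$ uniformly in $L_n$, whence $\tilde\epsilon_n = O(n^{-0.18})$.

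For each $m \geq 2$, I would define a sequence $(U_n^{(m)})_{n \geq m}$ with $U_m^{(m)} = \tilde\epsilon_m$, $U_i^{(m)} = 0$ for $i < m$, and the homogeneous harmonic-descent recursion for $n > m$. Linearity yields $U_n = \sum_{m=2}^n U_n^{(m)}$, and by Theorem~\ref{thm:exponent}\ref{thm:x_n-decreasing} each $U_n^{(m)}$ decreases to some limit $(\sigma_m^\Lambda)^2 \geq 0$. The second bullet of Remark~\ref{remark} (Theorem~\ref{thm:exponent}\ref{thm:bootstrap} with $c = 1000, \ell = 9$) gives $U_{m+j}^{(m)} = O(\tilde\epsilon_m/(j h_m))$ for $j \leq m^{0.99}$, so $(\sigma_m^\Lambda)^2 \leq U_{m+m^{0.99}}^{(m)} = O(m^{-1.17}/\log m)$, which is summable. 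Setting $\sigma_\Lambda^2 \ceq \sum_{m \geq 2} (\sigma_m^\Lambda)^2$, the tail satisfies $\sum_{m > n}(\sigma_m^\Lambda)^2 = O(n^{-0.17})$. To conclude $U_n \to \sigma_\Lambda^2$, I would split $\sum_{m=2}^n |U_n^{(m)} - (\sigma_m^\Lambda)^2|$ into three ranges exactly as in Lemma~\ref{lem: variance}: for $m \leq \sqrt n$, Theorem~\ref{thm:exponent}\ref{thm:x_n-simple-rate} gives per-term error $O(m\tilde\epsilon_m/(n-m)) = O(m^{0.82}/n)$, summing to $O(n^{-0.09})$; for $\sqrt n < m$ with $m + m^{0.99} \leq n$, the bound $|U_n^{(m)} - (\sigma_m^\Lambda)^2| \leq U_n^{(m)} \leq U_{m+m^{0.99}}^{(m)} = O(m^{-1.17}/\log m)$ sums to $O(n^{-0.085})$; for $m$ with $m + m^{0.99} > n$, the second bullet of Remark~\ref{remark} yields $U_n^{(m)} \leq \tilde\epsilon_m/(n-m)$ for $m < n$ and $U_n^{(n)} = \tilde\epsilon_n$, summing to $O(n^{-0.18} \log n)$. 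Each range contributes $o(1)$, finishing the proof.

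The main obstacle is that $\tilde\epsilon_n$ decays only at rate $n^{-0.18}$, substantially slower than the $O(1/n)$ rate enjoyed in Lemma~\ref{lem: variance}. If one used only the weaker $c = 2, \ell = 1$ bootstrap of Theorem~\ref{thm:exponent}\ref{thm:bootstrap}, the resulting bound $(\sigma_m^\Lambda)^2 = O(m^{-0.35}/\log m)$ would not be summable; the stronger $c = 1000, \ell = 9$ form in the second bullet of Remark~\ref{remark} is what makes the argument go through.
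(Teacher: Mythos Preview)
Your proposal is correct and follows essentially the same strategy as the paper: apply the law of total variance to obtain a perturbed harmonic-descent recursion for $\va[\Lambda_n]/n$ with inhomogeneity $O(n^{-0.18})$, decompose into atomic sequences $U_n^{(m)}$, use Theorem~\ref{thm:exponent}\ref{thm:bootstrap} (in its $c=1000$, $\ell=9$ form) to show the limits $(\sigma_m^\Lambda)^2 = O(m^{-1.17})$ are summable, and bound the residual error by splitting over ranges of $m$.

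The one substantive difference is in the small-$m$ range. The paper reuses the three-way split of Lemma~\ref{lem: length expectation} and in particular invokes Theorem~\ref{thm:exponent}\ref{thm:exponent-growing-k} for $m < n^{0.6}$; you instead handle $m \leq \sqrt n$ with the cruder Theorem~\ref{thm:exponent}\ref{thm:x_n-simple-rate}, which still gives a summable error $O(n^{-0.09})$. Your route is slightly more elementary in that it avoids the sharper exponent result~\ref{thm:exponent-growing-k} altogether, at the cost of a marginally worse (but still $o(1)$) error estimate. Your closing remark that the $c=2$, $\ell=1$ bootstrap would fail here is also correct and worth noting.
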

\begin{proof}
    We compute the variance recursively using the law of total variance as in Lemma~\ref{lem: variance}. This yields the decomposition 
    \begin{align*}
      \frac{1}{n}\va[\Lambda_n] &= \sum_{i=1}^{n-1} \frac{1}{h_{n-1}(n-i)} \frac{\va[\Lambda_i]}{i} + \frac{1}{nh_{n-1}^2} + n^{-0.18}
    \end{align*}
    where the $n^{-0.18}$ arises from Lemma~\ref{lem: length expectation}. 
    
    Recall that adding additional terms at step $k$ of the recursion corresponds to initiating a new recursion with $x_k = {1}/({kh_{k-1}^2}) + k^{-0.18}$. Since we know that such a recursion converges to $\sigma^{(k)} = O(k^{-1.17})$ by Theorem~\ref{thm:exponent}\ref{thm:bootstrap}, the main contributions are summable and yield our desired constant $\sigma^2_\Lambda = \sum_{k=2}^\infty \sigma^{(k)}$. Thus it remains to control the error terms on these new recursions. By the same error bounds and decomposition as Lemma~\ref{lem: length expectation}, the error term is bounded by \vspace{-.5\bs}
    \[ n^{-\gamma_*+2.44} + n(n^{0.6})^{-0.17} + n\sum_{i=n-n^{0.9}}^{n-1} \frac{1}{i^{0.18}(n-i)}\cdot \frac{1}{h_{i-1}} = O(n^{0.9}). \qedhere \]
\end{proof}

\begin{proof}[Proof of Corollary~\ref{cor:CLT-length}]
    Follows from Proposition~\ref{prop: contraction}, Lemmas~\ref{lem: length expectation} and~\ref{lem: length variance}, and \\ $\E{\mathrm{Exp}(h_{n-1})^3} = {6} / {h_{n-1}^3}$.
\end{proof}

\bibliographystyle{abbrv}
\bibliography{biblio}

\appendix 

\section{Positivity of $g'(x)$ in section~\texorpdfstring{\ref{subsec:exponent-proof}}{2.4}}\label{appendix}
Recall that we have 
\[ g'(x) = \frac{-(\gamma+1)x^{1-\gamma}+1+\gamma x^{-\gamma}}{(1-x)^2} + \gamma x^{-\gamma-1}\log(1-x). \]
Showing $g'(x) > 0$ is equivalent to 
\[ -(\gamma+1)x^{1-\gamma} + 1 + \gamma x^{-\gamma} + \gamma x^{-\gamma-1}(1-x)^2\log(1-x) > 0. \]
Multiplying both sides by $x^{\gamma+1}$ we obtain the inequality
\begin{equation}\label{eq: derivative}
    -(\gamma+1)x^2 + x^{\gamma+1} + \gamma x + \gamma(1-x)^2\log(1-x) > 0.
\end{equation}

We first treat the $(1-x)^2\log(1-x)$ term. Taking derivatives we find 
\begin{align*}
    \frac{d}{dx} (1-x)^2\log(1-x) &= x-1-2(1-x)\log(1-x), \\
    \frac{d^2}{dx^2} (1-x)^2\log(1-x) &= 3+2\log(1-x), \\
    \frac{d^j}{dx^j} (1-x)^2\log(1-x) &= \frac{2(j-3)!}{(x-1)^{j-2}}, \qquad \text{for $j \geq 3$.}
\end{align*}
This implies that the Taylor expansion is 
\[ (1-x)^2\log(1-x) = -x + \frac{3}{2}x^2 - \frac{1}{3}x^3 - x^4\sum_{j=1}^\infty \frac{2}{(j+3)(j+2)(j+1)}x^j. \]
For $0 < x < 1$, we have
\begin{align*}
    \sum_{j=1}^\infty \frac{2}{(j+3)(j+2)(j+1)}x^j &\leq \sum_{j=0}^\infty \frac{2}{(j+3)(j+2)(j+1)} = \sum_{j=0}^\infty \left(\frac{1}{j+1} - \frac{2}{j+2} + \frac{1}{j+3}\right) = \frac{1}{6}.
\end{align*}
This implies that for all $0 < x < 1$,
\[ (1-x)^2\log(1-x) \geq -x + \frac{3}{2}x^2 - \frac{1}{3}x^3 - \frac{1}{6}x^4. \]

Plugging this into \eqref{eq: derivative}, it suffices to show that 
\[ -(\gamma+1)x^2 + x^{\gamma+1} +\gamma x + \gamma\left(-x + \frac{3}{2}x^2 - \frac{1}{3}x^3 - \frac{1}{6}x^4\right) > 0. \]
This is equivalent to 
\[ h(x) \ceq \left(\frac{\gamma}{2}-1\right) - \frac{\gamma}{3}x - \frac{\gamma}{6}x^2 + x^{\gamma-1} > 0 \]
for all $0 < x < 1$. Taking a derivative we find 
\begin{align*}
    h'(x) &= -\frac{\gamma}{3}-\frac{\gamma}{3}x + (\gamma-1)x^{\gamma-2}
\end{align*}
Since $2 < \gamma < 3$ and $0 < x < 1$, we have $h(0)>0$ and $h(1)=0$. In order to verify $h(x) > 0$ for all $0 < x < 1$, it suffices to check that $h(r) > 0$ whenever $h'(r) = 0$ and $0 < r < 1$. This is because if $h$ had another root, then the conditions $h(0) > 0$ and $h(1) = 0$ force $h$ to have a point at which $h(r) \leq 0$ and $h'(r) = 0$, deriving a contradiction. 

The condition $h'(r) = 0$ restricts us to $(\gamma-1)r^{\gamma-2} = \gamma/3+r\gamma/3$. Substituting this into the expression for $h$, we get 
\[ h(r) = \left(\frac{\gamma}{2}-1\right) - \frac{\gamma}{3}r - \frac{\gamma}{6}r^2 + \frac{r\gamma/3 + r^2\gamma/3}{\gamma-1}. \]
Since multiplying through by $6(\gamma-1)$ does not change the sign, it suffices to evaluate
\begin{align*}
    6(\gamma-1)h(r) &= 3\gamma^2-9\gamma+6 - 2\gamma(\gamma-2)r + \gamma(3-\gamma)r^2.
\end{align*}
This quadratic is minimized at $m = \frac{2\gamma(\gamma-2)}{2\gamma(3-\gamma)} = \frac{\gamma-2}{3-\gamma}$. Notice that for $\gamma > 2.5$, we have $m < 1$. Moreover, at $r=1$, the quadratic evaluates to $6-2\gamma > 0$. This implies that $h(r) > 0$ whenever $r < 1$ for $\gamma > 2.5$.

From now on, we may assume $\gamma \leq 2.5$. We continue by evaluating the quadratic at $m$. We want to show that
\[ 6(\gamma-1)h(r) \geq (3\gamma-6)(\gamma-1) - 2\frac{\gamma(\gamma-2)^2}{3-\gamma} + \frac{\gamma(\gamma-2)^2}{3-\gamma} > 0. \]
This is equivalent to 
\[ 3(3-\gamma)(\gamma-1) > \gamma(\gamma-2) \iff 0 > 4\gamma^2-14\gamma +9. \]
It is easy to check that this is true for all $2 < \gamma \leq 2.5$, so we conclude that $h(r) > 0$ whenever $h'(r) = 0$ and $0 < r < 1$.

\section{Proof of Lemma~\texorpdfstring{\ref{lem: app1}}{3.1}}\label{app: lem1}
By the triangle inequality, 
\[ \zeta_3(Z_n, \tau_n N_1) = \zeta_3(Z_n, Z_n^*) + \zeta_3(Z_n^*, \tau_n N_1). \]
Because of the strategically similar definitions, we can express
\begin{align*}
    &\zeta_3(Z_n, Z_n^*) \\
    &= \E{\E{\zeta_3\left(\sqrt{\frac{I_n}{n}} Z_{I_n} + \sqrt{\frac{n-I_n}{n}} Z_{n-I_n} + b^{(n)}, \sqrt{\frac{I_n}{n}} \tau_{I_n}N_1 + \sqrt{\frac{n-I_n}{n}} \tau_{n-I_n}N_2 + b^{(n)}\right)\middle\vert I_n}} \\
    &\leq \E{\E{\zeta_3\left(\sqrt{\frac{I_n}{n}} Z_{I_n} + \sqrt{\frac{n-I_n}{n}} Z_{n-I_n}, \sqrt{\frac{I_n}{n}} \tau_{I_n}N_1 + \sqrt{\frac{n-I_n}{n}} \tau_{n-I_n}N_2\right)\middle\vert I_n}} \\
    &\leq \E{\E{\zeta_3\left(\sqrt{\frac{I_n}{n}} Z_{I_n} + \sqrt{\frac{n-I_n}{n}} Z_{n-I_n}, \sqrt{\frac{I_n}{n}} Z_{I_n} + \sqrt{\frac{n-I_n}{n}} \tau_{n-I_n}N_2\right)\middle\vert I_n}} \\
    &\qquad + \E{\E{\zeta_3\left(\sqrt{\frac{I_n}{n}} Z_{I_n} + \sqrt{\frac{n-I_n}{n}} \tau_{n-I_n}N_2, \sqrt{\frac{I_n}{n}} \tau_{I_n}N_1 + \sqrt{\frac{n-I_n}{n}} \tau_{n-I_n}N_2\right)\middle\vert I_n}} \\
    &\leq \E{\E{\zeta_3\left(\sqrt{\frac{n-I_n}{n}} Z_{n-I_n}, \sqrt{\frac{n-I_n}{n}} \tau_{n-I_n}N_2\right)\middle\vert I_n}} \\
    &\qquad + \E{\E{\zeta_3\left(\sqrt{\frac{I_n}{n}} Z_{I_n}, \sqrt{\frac{I_n}{n}} \tau_{I_n}N_1\right)\middle\vert I_n}} \\
    &= \E{\E{\left(\frac{n-I_n}{n}\right)^{3/2}d_{n-I_n} + \left(\frac{I_n}{n}\right)^{3/2}d_{I_n}\middle\vert I_n}} \\
    &= \E{\left(\frac{n-I_n}{n}\right)^{3/2}d_{n-I_n} + \left(\frac{I_n}{n}\right)^{3/2}d_{I_n}}.
\end{align*}
Here we used the properties $\zeta_3(X+Z, Y+Z) \leq \zeta_3(X, Y)$ and $\zeta_3(cX, cY) = |c|^3\zeta_3(X, Y)$ with the triangle inequality.

\section{Proof of Lemma~\texorpdfstring{\ref{lem: app2}}{3.2}}\label{app: lem2}
Recall that $G_n = \sqrt{\frac{I_n}{n}\tau_{I_n}^2 + \frac{n-I_n}{n}\tau_{n-I_n}^2}$ so that $Z_n^* \disteq G_n \Tilde{N} + b^{(n)}$ where $\Tilde{N}$ is an independent standard Gaussian, and $\Delta_n = \sqrt{|G_n^2 - \tau_n^2|}$. Let $A = \{G_n > \tau_n\}$. Then, we have the following distributional equalities:
letting $N$ and $N'$ be two more independent (from everything) standard Gaussians, 
    \begin{align*}
        Z_n^* &\disteq \ind{A}(\tau_n N + \Delta_n N') + \ind{A^c}(G_n N) +  b^{(n)} =: \widehat{Z_n^*}, \\
        \tau_n N_1 &\disteq \ind{A}(\tau_n N) + \ind{A^c}(G_n N + \Delta_n N') =: \widehat{N_1}.
    \end{align*}
    Essentially, this isolates the ``shared'' parts of $Z_n^*$ and $\tau_n N_1$ and writes the ``unshared'' parts as an independent (orthogonal) Gaussian. Taylor expanding around this ``shared'' Gaussian $N$ and using properties of test functions $f \in \mathcal{F}_3$, we can write 
    \[ \E{f(Z_n^*) - f(\tau_n N_1)} = \E{S_1 + S_2 + R(\widehat{Z_n^*}, N) + R(\widehat{N_1}, N)}, \]
    which are sequentially defined below. 

    $S_1$ is all terms involving the first derivative $f'(N)$. This consists of $(\widehat{Z_n^*} - N) - (\widehat{N_1} - N) = \widehat{Z_n^*} - \widehat{N_1}$. By independence, any term that depends only linearly on $N'$, or $b^{(n)}$ vanish in the expectation (here we use that $\E{b^{(n)}} = 0$). This implies $\E{S_1} = 0$. 

    $S_2$ is all terms involving the second derivative $\frac{f''(N)}{2}$. This consists of 
    \begin{align*}
        (\widehat{Z_n^*} - N)^2 - (\widehat{N_1} - N)^2 &= (\widehat{Z_n^*} - \widehat{N_1})(\widehat{Z_n^*} + \widehat{N_1} - 2N) \\
        &= \left(b^{(n)} + \Delta_n N'(\ind{A}-\ind{A^c})\right) \\
        &\qquad \left( 2\tau_n N\ind{A} + 2G_n N \ind{A^c} - 2N + \Delta_n N' +  b^{(n)} \right)
    \end{align*}
    We distribute the terms in the first parentheses. The $b^{(n)}$ interacts with $\ind{A}$, $\ind{A^c}$ and itself contributing 
    \[ \E{(b^{(n)})^2 + 2b^{(n)}N(\ind{A}(\tau_n-1) + \ind{A^c}(G_n-1))}. \]
    The $N'$ term only interacts with the $N'$ term, yielding $\E{\Delta_n^2 (\ind{A}-\ind{A^c})} = \E{G_n^2} - \tau_n^2$. Notice that
    \[ \tau_n^2 = \va[Z_n^*] = \E{G_n^2 + (b^{(n)})^2}. \]
    This implies that 
    \[ \E{S_2} = \E{f''(N)N b^{(n)}(\ind{A}(\tau_n-1)+\ind{A^c}(G_n-1)}. \]
    As computed in \cite[p.~2847]{NR:04}, $|\E{f''(N)N}| \leq 1$ so by the Cauchy--Schwarz inequality, 
    \[ |\E{S_2}| \leq \|b^{(n)}\|_2(|\tau_n-1| + \|G_n-1\|_2). \]
    
    $R(\widehat{Z_n^*}, N)$ and $R(\widehat{N_1}, N)$ are the remainder terms from the Taylor expansion. By the 1-Lipschitz property of $f''$ we have 
    \begin{align*}
        \E{R(\widehat{Z_n^*}, N)} &\leq \E{|\widehat{Z_n^*} - N|^3}
    \end{align*}
    In expanding the third power, terms that involve both $\ind{A}$ and $\ind{A^c}$ vanish, so we are left only with 
    \begin{align*}
        \E{R(\widehat{Z_n^*}, N)} &\leq \E{\left|N(\tau_n-1) + \Delta_n N' +  b^{(n)}\right|^3} + \E{\left|N(G_n-1) + b^{(n)}\right|^3} \\
        &\leq C |\tau_n-1|^3 + \|\Delta_n\|_3^3 + \|b^{(n)}\|_3^3 + \|G_n-1\|_3^3
    \end{align*}
    for some constant $C$. Noting that $\E{R(\widehat{N_1}, N)}$ is bounded by the same term, we have proven the lemma. 
\end{document}